\newtheorem{theorem}{Theorem}[section]
\newtheorem{proposition}[theorem]{Proposition}
\newtheorem{lemma}[theorem]{Lemma}
\newtheorem{conjecture}[theorem]{Conjecture}
\newtheorem{corollary}[theorem]{Corollary}
\theoremstyle{remark}
\newtheorem{remark}{Remark}
\theoremstyle{definition}
\newtheorem{example}[theorem]{Example}
\begin{document}
\markboth{Hannah Larson and Geoffrey Smith}
{Congruence properties of Taylor coefficients of modular forms}

\title{Congruence properties of Taylor coefficients of modular forms}

\author{Hannah Larson}
\address{Department of Mathematics, Harvard University, 1 Oxford Street\\
Cambridge, Massachusetts 02138, United States}
\email{hannahlarson@college.harvard.edu}

\author{Geoffrey Smith}
\address{Department of Mathematics, Yale University, 10 Hillhouse Avenue\\
New Haven, Connecticut 06511}
\email{geoffrey.smith@yale.edu}
\maketitle


\begin{abstract}
In their work, Serre and Swinnerton-Dyer study the congruence properties of the Fourier coefficients of modular forms. We examine similar congruence properties, but for the coefficients of a modified Taylor expansion about a CM point $\tau$. These coefficients can be shown to be the product of a power of a constant transcendental factor and an algebraic integer. In our work, we give conditions on $\tau$ and a prime number $p$ that, if satisfied, imply that  $p^m$ divides the algebraic part of all the Taylor coefficients of $f$ of sufficiently high degree. We also give effective bounds on the largest $n$ such that $p^m$ does not divide the algebraic part of the $n^{\text{th}}$ Taylor coefficient of $f$ at $\tau$ that are sharp under certain additional hypotheses.
\end{abstract}


\section{Introduction and statement of results}
Let $f = \sum{a_nq^n}$ be a (holomorphic) modular form of weight $k$ on $\text{SL}_2(\mathbb{Z})$ with integral Fourier coefficients, where $q = e^{2\pi i z}$. It is well known that the derivative of a modular form is not generally a modular form. However, it is possible to define a \emph{non-holomorphic} derivative  $\partial$ which preserves modularity but not holomorphicity. Furthermore, this derivative gives rise to a Taylor series expansion, 
\begin{equation} \label{tayseries}
(1-w)^{-k}f\left(\frac{z - \bar{z}w}{1-w}\right) = \sum_{n=0}^{\infty}{(\partial^nf)(z)\frac{(4\pi \text{Im}(z)w)^n}{n!}} \quad (|w| < 1),
\end{equation}
that converges for $|w| < 1$ and thereby gives a well-defined description of $f$ on the upper half of the complex plane (see, for example, Secion 5.1 of \cite{Z}.) 

\begin{remark}
 In this last respect, equation \eqref{tayseries} is a more useful expansion than the standard Taylor series $\sum f^{(n)}(z)\frac{(w-z)^{n}}{n!}$, which only converges in a disk. 
\end{remark}
Congruences of Fourier coefficients have been studied extensively.  Ramanujan famously observed that $\sigma_{11}(n) \equiv \tau(n) \pmod {691}$, and since then Deligne and others have constructed a deep theory of congruence properties of Fourier series using  Galois representations \cite{L76}, \cite{S72}, \cite{SD73}. In fact, these ideas play a central role in Wiles' proof of Fermat's Last Theorem \cite{W95}.

We will instead study the congruence properties of the Taylor coefficients, relying on the theory of differential operators mod $p$ as explored by Swinnerton-Dyer in \cite{SD73}, rather than Galois representations. In general, the Taylor series coefficients are transcendental. However, for a modular form with integral Fourier coefficients and a CM point $\tau$, we can express $(\partial^nf)(\tau)$ as 
\begin{equation} \label{deft}
 (\partial^n f)(\tau) = t_f(\tau;n) \Omega_{\tau}^{2n + k},
 \end{equation}
where $t_f(\tau; n)$ is integral over $\mathbb{Z}_{\frac{1}{6}}:= \mathbb{Z}[\frac{1}{6}]$ and $\Omega_{\tau}$ is a transcendental factor depending only on $\tau$. The choice of $\Omega_\tau$ is not canonical. However, all choices for $\Omega_{\tau}$ are algebraic multiples of the canonical $\Omega_{-d}^*$ which is given by the Chowla-Selberg formula (\cite{Z}, Section 6.3):
\begin{equation} \label{chowsel}
\Omega_{-d}^* := \frac{1}{\sqrt{2\pi |d|}}\left(\prod_{j = 1}^{|d| - 1}{\Gamma \left(\frac{j}{|d|}\right)^{\chi_{-d}(j)}}\right)^{h(-d)/w(-d)},
\end{equation}
where $-d$ is the discriminant of the quadratic extension containing $\tau$, $\Gamma$ is the Gamma-function, $\chi_{-d}(\cdot)=\left(\frac{-d}{\cdot}\right)$ is the Kronecker character of $\mathbb{Q}(\sqrt{-d})$, $h(-d)$ is the class number, and $w(-d)$ is the number of units in the ring of integers $\mathcal{O}_{-d}$. 
\begin{remark}
In general, choosing $\Omega_{-d}^*$ for $\Omega_\tau$ is not ideal for our purposes because $f(\tau)/(\Omega_{-d}^*)^k$ is not in general an algebraic integer. However, since the ring of almost holomorphic modular forms, within which the image of the ring of modular forms under $\partial^n$ is a subspace, is a finitely generated ring, there is some algebraic number $a$ such that we can set $\Omega_\tau=\Omega^*_{-d}/a$. This process can produce infinitely many different $\Omega_\tau$, and our results are true for all of them; however, our results are most interesting for those $\Omega_\tau$ such that the algebraic integer $t_g(\tau;0)$ has zero $p$-adic valuation for some almost holomorphic modular form $g$. We give an example of this at the start of Section 2.
\end{remark}

In view of \eqref{tayseries} and \eqref{deft}, congruences of the $t_f(\tau; n)$ translate into meaningful statements about the Taylor coefficients. Our first result shows that such Taylor coefficients become increasingly divisible by powers of $p$ for half of the primes $p$.

\begin{theorem} \label{weak}
Suppose that $f$ is a holomorphic modular form of weight $k$ with integer Fourier coefficients, and suppose $\tau$ is a CM point in $\mathbb{Q}(\sqrt{-d})$. If $p \geq 5$ is a prime such that $\left(\frac{-d}{p}\right) \in \{0, -1\}$, then
\[t_f(\tau; n) \equiv 0 \pmod {p^m}\]
for all integers $m > 1$ and $n \geq (m-1)p^2$.
\end{theorem}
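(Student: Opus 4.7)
My plan uses two ingredients. First, the hypothesis $\left(\frac{-d}{p}\right)\in\{0,-1\}$ implies $p$ is inert or ramified in $K = \mathbb{Q}(\sqrt{-d})$, so the CM elliptic curve at $\tau$ has supersingular reduction at primes $\mathfrak{p}\mid p$ of its field of definition, whence the normalized Hasse invariant $A := E_{p-1}(\tau)/\Omega_\tau^{p-1}$ has positive $\mathfrak{p}$-adic valuation. Second, Serre--Swinnerton-Dyer's theory of mod $p$ modular forms supplies the Fermat-type identity $\theta^p f \equiv \theta f \pmod p$ on $q$-expansions (immediate from $\theta q^n = n q^n$ and $n^p\equiv n\pmod p$), the congruence $E_2 \equiv E_{p+1}\pmod p$, and the filtration bound $w(\vartheta f) \leq w(f)+p+1$ for the Serre derivative $\vartheta = \theta - \tfrac{k}{12}E_2$. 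The goal is to combine these to force $m$ factors of $E_{p-1}$ into $\partial^n f$ modulo $p^m$ once $n \geq (m-1)p^2$.

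The first step is to express $\partial^n f$ as a universal polynomial $P_n(E_2^*, E_4, E_6)$ with coefficients in $\mathbb{Z}[\tfrac{1}{6}]$, using the Ramanujan-type identities for $\partial$ applied to $E_2^*, E_4, E_6$ (which show that $\partial$ preserves the ring $\mathbb{C}[E_2^*, E_4, E_6]$ of almost holomorphic modular forms, where $E_2^*(z) = E_2(z) - \tfrac{3}{\pi\text{Im}(z)}$) together with a polynomial expression of $f$ in $E_4, E_6$. Evaluating at $\tau$ and dividing by $\Omega_\tau^{2n+k}$, the Taylor coefficient $t_f(\tau;n)$ becomes the same polynomial in the algebraic integers $E_2^*(\tau)/\Omega_\tau^2$, $E_4(\tau)/\Omega_\tau^4$, and $E_6(\tau)/\Omega_\tau^6$. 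Because $A$ has positive $\mathfrak{p}$-adic valuation, it suffices to show that $P_n$ reduces modulo $p^m$ to a polynomial divisible by $E_{p-1}(E_4,E_6)^m$, reducing the theorem to a purely algebraic divisibility question.

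The heart of the proof is an induction on $m$. For the base case $m = 2$, one shows $\partial^{p^2} f \equiv E_{p-1}\cdot h \pmod p$ for some modular form $h$ of filtration less than $k + 2p^2$: this follows from $\theta^{p^2} f \equiv \theta f \pmod p$, which collapses the nominal weight $k + p^2(p+1)$ of the iterate down to at most $k + p + 1$ on $q$-expansions, so that Swinnerton-Dyer's filtration characterization forces divisibility by a positive power of $E_{p-1}$. The inductive step runs the same mechanism on the residual form from the previous stage, producing one additional factor of $E_{p-1}$ modulo the next power of $p$ for each further block of $p^2$ iterations. The main technical obstacle is exactly this inductive lift: one needs versions of the mod-$p$ identities $\theta^p \equiv \theta$ and $E_2 \equiv E_{p+1}$ valid modulo $p^m$ in the context of $\partial^{p^2}$ rather than $\theta^{p^2}$, and one must carefully track how the non-holomorphic correction $\tfrac{k}{4\pi\text{Im}(z)}f$ in $\partial$ contributes without swallowing the accumulating Hasse-invariant factors. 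Once the induction is in place, evaluating at $\tau$ and applying the positive $\mathfrak{p}$-adic valuation of $A$ yields $p^m \mid t_f(\tau;n)$ for all $n \geq (m-1)p^2$.
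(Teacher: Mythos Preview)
Your overall architecture matches the paper's: reduce to showing that the polynomial $P_n$ for $\partial^n f$ lies in a suitable power of an ideal generated by $E_{p-1}$ and $p$, then evaluate at $\tau$ using supersingularity (the paper's Lemma~5.1 is exactly your Hasse-invariant observation). But two of your steps are genuine gaps rather than routine details.

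First, your base case is too weak. Showing $\partial^{p^2}f \equiv E_{p-1}\cdot h \pmod p$ only yields $\partial^{p^2}f = E_{p-1}h + p g$, and evaluating at $\tau$ gives merely $p^1 \mid t_f(\tau;p^2)$, whereas the theorem for $m=2$ demands $p^2$. A mod-$p$ filtration drop, no matter how large the exponent of $E_{p-1}$ it produces, cannot control the ``$+\,pg$'' error term. The paper closes this gap by proving (via Rankin--Cohen brackets) that $D^{p^2}f$ is modular \emph{mod $p^2$}, not just mod $p$, and then deducing $D^{p^2}f \in (E_{p-1}^{\,p},p)^2$; this ideal, not $(E_{p-1},p)$, is what yields $p^2$ on evaluation.

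Second, you correctly flag the inductive lift as the main obstacle, but you do not supply a mechanism. The difficulty is real: one cannot simply iterate a mod-$p$ identity, because after writing $D^{mp^2}f$ in the ideal one must apply $D^{p^2}$ to products $A^{ip}H_i$ where $H_i$ is \emph{quasimodular}, and the $P$-dependence obstructs a naive filtration argument. The paper handles this by (i) introducing a valuation $v(f)=\sup\{n: f\in(E_{p-1}^{\,p},p)^n\}$, (ii) proving $v(D^{p^2}g)\ge v(g)+1$ for modular $g$ via a product-rule case analysis, and (iii) using the explicit formula $\partial^{n}f=\sum_r \binom{n}{r}\frac{(k+n-1)!}{(k+n-r-1)!}\bigl(\tfrac{-1}{4\pi y}\bigr)^{r} D^{n-r}f$ to show that the $P^r$-coefficients of $D^{mp^2}f$ for $r>0$ already satisfy $v\ge m+2$, reducing the induction to the constant term $C_0$, which is modular. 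Without something equivalent to this decomposition, the induction does not go through.
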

It turns out that when $m \leq k - 2$, we have the following better bound.

\begin{theorem} \label{sharp}
Assume the hypotheses in Theorem \ref{weak}. If $m \leq k -2$ and $p\geq 2k-2$, then
\[t_f(\tau; n) \equiv 0 \pmod {p^{m}} \]
for all $n \geq \left\lceil \frac{m}{2} \right\rceil p^2$.
\end{theorem}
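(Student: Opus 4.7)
The plan is to sharpen the filtration argument behind Theorem~\ref{weak} by exploiting the new hypotheses $m \leq k-2$ and $p \geq 2k-2$. Recall the framework: $\partial^n f$ is an almost holomorphic modular form of weight $k + 2n$, and $(\partial^n f)(\tau) = t_f(\tau;n)\,\Omega_\tau^{2n+k}$. The hypothesis $\left(\tfrac{-d}{p}\right) \in \{0,-1\}$ forces $\tau$ to be supersingular modulo primes above $p$, so the mod $p$ Hasse invariant $A$ (of weight $p-1$) vanishes at $\tau$, and every factor of $A$ that divides the relevant reduction of $\partial^n f$ contributes a power of $p$ to the $p$-adic valuation of $t_f(\tau;n)$.

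First, I would reduce the problem to tracking the filtration and $A$-divisibility of iterates $\theta^n f$, where $\theta = q\,\tfrac{d}{dq}$ acts on $q$-expansions. Since the image of $\partial^n$ lies in the ring of almost holomorphic modular forms generated over the holomorphic ring by $E_2^*$, and $E_2^*$ reduces mod $p$ to $E_2$ after a standard supersingular adjustment, we may replace $\partial^n f$ by a polynomial expression in $\theta$-iterates of $f$ and other modular forms. Swinnerton-Dyer's theory then provides the filtration inequalities $w(\theta g) \leq w(g) + p + 1$ with strict inequality exactly when $p \mid w(g)$; the condition $p \geq 2k-2$ rules out small-prime anomalies in the residue pattern of $k + 2j$ modulo $p$ and so controls when filtration drops, and hence $A$-factorizations, are forced.

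Second, I would show that under $m \leq k-2$, each block of $p^2$ consecutive $\theta$-iterations yields \emph{two} independent $A$-factorizations, rather than the single extraction per block underlying Theorem~\ref{weak}; this doubled efficiency accounts for the improved bound $\lceil m/2 \rceil p^2$ replacing $(m-1)p^2$. The constraint $m \leq k-2$ provides enough vertical room in the graded ring of mod $p$ modular forms to divide by $A^m$ without collapsing into degenerate low-weight strata, while $p \geq 2k-2$ ensures the filtration-drop pattern is regular enough to pair drops up. The main obstacle is the combinatorial bookkeeping: verifying that in each window of $p^2$ iterations one can locate two indices $j$ at which the reduction of $\theta^j f$ is both divisible by $A$ in the graded mod $p$ ring and compatible with further $\theta$-iteration on the quotient, and that this pairing persists over all $\lceil m/2 \rceil$ windows required. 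This is the essential strengthening beyond the single-extraction argument used for Theorem~\ref{weak}.
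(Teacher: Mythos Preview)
Your proposal has a fundamental gap that makes the strategy unworkable as stated. You write that ``every factor of $A$ that divides the relevant reduction of $\partial^n f$ contributes a power of $p$ to the $p$-adic valuation of $t_f(\tau;n)$,'' and you then organize the argument around counting $A$-factors in the mod $p$ graded ring via Swinnerton-Dyer filtration drops. But divisibility by $A^j$ \emph{modulo $p$} does not give $p^j$ dividing $t_f(\tau;n)$: if $\partial^n f = A^j G + pH$ in $\mathbb{Z}_{(p)}[P^*,Q,R]$, then evaluation at $\tau$ yields $t_f(\tau;n) = t_A(\tau;0)^j\, t_G(\tau;0) + p\, t_H(\tau;0)$, and the second term is only divisible by $p$, not by $p^j$. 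Filtration theory mod $p$ therefore cannot by itself produce $p^m$-divisibility for $m>1$; it gives at most one factor of $p$.

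The paper gets around this by abandoning pure mod $p$ filtration in favor of the mixed ideal $(A^p,p)$ in $\mathbb{Z}_{(p)}[P,Q,R]$ and the associated ``valuation'' $v(f)=\sup\{n:f\in(A^p,p)^n\}$. The core technical work (Lemma~\ref{sharpprop}, built on Propositions~\ref{key2} and~\ref{increaseby1}) shows that under $p\geq 2k-2$ and $2m\leq k-2$ one has $v(D^{mp^2}f)\geq 2m$; this requires controlling $D^{p^2}$ not just mod $p$ but mod $p^2$, which in turn rests on the Rankin-Cohen bracket computations of Proposition~\ref{D^{np^2-k+1}} and Lemma~\ref{D^{p^2}modular}. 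Once $D^nf\in(A^p,p)^m$, one writes $D^nf=\sum_{i}p^{m-i}A^{ip}H_i$, transports this to $\partial^nf$ via Lemma~\ref{relate}, and then each summand contributes $p^{m-i}\cdot p^{ip}$ upon evaluation at $\tau$ by Lemma~\ref{ZK}. Your intuition that ``each $p^2$-block yields two extractions'' is morally the statement $v(D^{p^2}g)\geq v(g)+2$, but the mechanism is not locating indices where filtration drops; it is the interplay of integral $A^p$-divisibility with honest $p$-divisibility, and the mod $p^2$ modularity of $D^{p^2}$ is essential input that your filtration sketch does not supply.
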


We conjecture that some additional hypotheses of Theorem \ref{sharp} are unnecessary. More specifically, we conjecture the following:

\begin{conjecture}
Suppose that $f$ is a holomorphic modular form of weight $k$ with integer Fourier coefficients, and suppose $\tau$ is a CM point in $\mathbb{Q}(\sqrt{-d})$. If $p$ is a prime satisfying $\left(\frac{-d}{p}\right) \in \{0, -1\}$ and $p\geq k$, then 
\[t_f(\tau; n) \equiv 0 \pmod {p^{m}}\]
for all integers $m > 1$ and $n \geq \left\lceil \frac{m}{2} \right\rceil p^2$.
\end{conjecture}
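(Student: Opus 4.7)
The plan is to extend the filtration-based approach behind Theorem \ref{sharp} to the full range of exponents $m$ and to the weaker prime bound $p \geq k$. The starting point is that $\partial$ lifts Serre's derivation $\partial_k f = \theta f - (k/12) E_2 f$ on mod-$p$ modular forms, where $\theta = q\,d/dq$, and that the condition $\left(\frac{-d}{p}\right) \in \{0, -1\}$ is exactly the statement that a CM point $\tau \in \mathbb{Q}(\sqrt{-d})$ lies on the supersingular locus at $p$, so the Hasse invariant $A = E_{p-1}$ vanishes at $\tau$ modulo a prime above $p$. Heuristically, each application of $\partial$ raises the mod $p$ filtration by at most $p+1$, and whenever the filtration crosses a multiple of $p$, the evaluation at $\tau$ picks up an extra factor of $p$ from the supersingular vanishing.

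First I would introduce a refined filtration for modular forms modulo $p^m$: let $\omega_j(f)$ denote the minimal weight of a representative of $f$ in the $j$-th $p$-adic layer. Then I would prove analogues of Swinnerton-Dyer's inequality $\omega(\theta f) \leq \omega(f) + p + 1$ at every level $j$, with appropriate corrections coming from the lower levels and from $E_2$. Iterating these inequalities $n$ times for $n \geq \lceil m/2 \rceil p^2$, and combining them with the vanishing of $A$ to order $j$ at $\tau$, one should see that each ``crossing'' of a $p^2$-threshold in $n$ contributes one additional factor of $p$ to the $p$-adic valuation of $t_f(\tau; n)$, yielding $v_p(t_f(\tau;n)) \geq m$. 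The step size of $p^2$ rather than $p$ reflects the fact that the $E_2$ correction term effectively doubles the filtration jump per application of $\partial$ in the accounting.

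The main obstacle is the control of the higher levels $j \geq 2$ of the filtration. In Theorem \ref{sharp} the restriction $m \leq k - 2$ precisely ensures that one never needs to look beyond the first non-trivial level, and $p \geq 2k - 2$ guarantees that the $\omega$-filtration and the weight filtration agree. Dropping both assumptions forces one to handle the higher-order deformation of the Hasse invariant---that is, the ideal of functions vanishing to order $j$ along the supersingular locus of the moduli stack over $\mathbb{Z}/p^m\mathbb{Z}$. I expect that the cleanest route is to replace the mod-$p$ Swinnerton-Dyer framework with Katz's $p$-adic modular forms on the ordinary locus and to study $\partial^n f$ through its expansion in a supersingular disk around $\tau$; the sharpness of the conjectured bound $\lceil m/2 \rceil p^2$ strongly suggests that the local analytic structure of this disk, together with a careful bookkeeping of the $E_2$ contributions, is exactly what is needed to push the argument through.
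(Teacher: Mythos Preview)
The statement you are trying to prove is labelled a \emph{Conjecture} in the paper and is not proved there; the authors explicitly present it as a strengthening of Theorem~\ref{sharp} obtained by dropping the hypotheses $m\le k-2$ and $p\ge 2k-2$, and they offer no argument for it beyond numerical evidence. So there is no ``paper's own proof'' to compare against, and any genuine proof would constitute new mathematics.

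Your proposal is not a proof but a research outline, and you say as much in its final paragraph. Two concrete gaps deserve mention. First, your accounting is off: you write that ``each crossing of a $p^2$-threshold in $n$ contributes one additional factor of $p$,'' but the conjectured bound $n\ge\lceil m/2\rceil p^2$ requires \emph{two} factors of $p$ per $p^2$-block, exactly as in Lemma~\ref{sharpprop} where $v(D^{mp^2}f)\ge 2m$. With only one factor per block you would recover Theorem~\ref{weak}, not the conjecture. Second, the substantive difficulty---and the reason the paper stops at a conjecture---is precisely the step you defer: controlling the higher $p$-adic layers once $m>k-2$ or $p<2k-2$. The paper's machinery (Propositions~\ref{key2} and~\ref{increaseby1}) uses those hypotheses in an essential way to bound the terms in the Leibniz expansion of $D^{p^2}(A^{ip}M_i)$; without them, Case~4 of Proposition~\ref{key2} fails because one can have $ip\ge j$ and lose the extra factor of $p$. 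Your suggestion to pass to Katz's $p$-adic modular forms and analyze the supersingular disk is a reasonable direction, but nothing in the proposal indicates how that framework would actually produce the missing factor of $p$ per $p^2$-block, so as written this remains a plan rather than an argument.
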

\begin{remark}
In \cite{DG08}, another work about the $p$-adic behavior of the Taylor coefficients of modular forms, Datskovsky and Guerzhoy give interesting relations between the Taylor series coefficients of a modular form about a CM point $\zeta\in\mathbb{Q}(\sqrt{-d})$ and the coefficients about $\zeta/p$ and $\zeta/p^2$. Their relations, however, require that $\zeta$ be a so-called \emph{suitable point} with respect to $p$, which in turn requires that $\left(\frac{-d}{p}\right)=1$, whereas our results require that $\left(\frac{-d}{p}\right)\neq1$.
\end{remark}

\begin{example}
Consider the Eisenstein series of weight $4$, $E_4$. Using equation \eqref{chowsel}, we find that $\Omega_{-4}^* = 0.590170299508048 \ldots$ and $E_4(i)/(\Omega_{-4}^*)^4 = 12$. The Taylor series expansion is then given by
\begin{align*}
(1-w)^{-4}E_4\left(\frac{i + iw}{1-w}\right) &= 12(\Omega_{-4}^*)^{4} + 20(\Omega_{-4}^*)^{8}\frac{(4\pi w)^{2}}{2!}+\ldots \\
&\quad + t_{E_{4}}(i;50)(\Omega_{-4}^*)^{104}\frac{(4 \pi w)^{50}}{50!}+\ldots.
\end{align*}
Computation shows $t_{E_{4}}(i,50)=3^{10} \cdot 5 \cdot 7^4 \cdot 85382194794899 \cdot 2049349304689849$ is a multiple of $7^{2}$ as expected by our conjecture. Also, we have that
\[t_{E_4}(i; 170) = \frac{3^{43} \cdot 5 \cdot 7^6 \cdot 11^2 \cdot 31 \cdot 43}{2} \cdot 7713094 \ldots 4732307\]
is a multiple of $7^6$, giving an example of the conjecture with $m = 3$ and $p = 7$. It is also divisible by $11^2$, and so is an example of the conjecture with $m = 1$ and $p = 11$. 

Now consider $p = 13$, a prime which does not satisfy $\left(\frac{-d}{p}\right) \in \{0, -1\}$. We observe that $t_{E_4}(i; 170)$ is not divisible by $13$ even though $170 > 13^2$.
\end{example}

This paper is organized as follows. In Section 2, we introduce the machinery that will be needed to prove our results. In Section 3, we prove a number of lemmas about differential operators mod $p$ and mod $p^2$. In Section 4, we introduce and prove several properties of a new ``valuation" $v$ that encodes certain useful divisibility properties of a modular form. The key to proving Theorems \ref{weak} and \ref{sharp}  depends on the results in Sections 2, 3, and 4 in an indirect way. We accumulate powers of $p$ by keeping careful track of the powers of $E_{p-1}$---the Eisenstein series of weight $p-1$---that factor into $\partial^nf$. Lemma \ref{ZK} is the main device that allows us to translate these factors of $E_{p-1}$ to factors of $p$ dividing the $t_f(\tau; n)$. Section 5 includes this lemma and its proof and concludes with the proof of Theorems \ref{weak} and \ref{sharp}.

\section{Preliminaries}

The Eisenstein series $E_k$ of weight $k$ are defined by
\begin{equation}
E_k := 1 - \frac{2k}{B_k}\sum_{n=1}^{\infty}{\sigma_{k-1}(n)q^n},
\end{equation}
where $B_k$ is the $k^{\text{th}}$ Bernoulli number and $\sigma_{k-1}$ is the $(k-1)^{\text{th}}$ divisor function. For even $k \geq 4$, the $E_k$ are modular forms of weight $k$. Following Ramanujan, we write $P=E_{2}$, $Q=E_{4}$, and $R=E_{6}$. Note  that $P$ is not a modular form, but
 \begin{equation}
P^* := P - \frac{3}{\pi \text{Im}(z)}
 \end{equation}
 transforms like a modular form of weight $2$; that is, it satisfies $P^*(-1/z) = z^2P^*(z)$ and $P^*(z + 1) = P^*(z)$. 

It is well known (see, for instance, \cite {Z} Proposition 4) that  $f$ is expressible as a polynomial in $Q$ and $R$ with coefficients in $\mathbb{Q}$. Since $f$ has weight $k$, every term 
$a_{b, c}Q^bR^c$ of this polynomial will satisfy $k=4b+6c$, which we call the \emph{weight} of the monomial. We then can consider modular forms to be polynomials in $Q$, $R$ all of whose monomials have the same weight. By declaring the weight of $P$ and $P^*$ 
 to be 2, \emph{quasimodular forms} are defined as those holomorphic functions on the upper half plane expressible as polynomials in $P,Q,R$ in which every monomial has the same weight. In addition, the \emph{almost holomorphic modular forms} are defined as those functions expressible as polynomials in $P^*,Q,R$ in which every monomial has the same weight.

Now that we have fixed our notation, we give an example of when the canonical transcendental factor $\Omega_{-d}^*$ is not an ideal choice of $\Omega_{\tau}$ and find a suitable algebraic multiple of $\Omega_{-d}^*$.
\begin{example}
Let $\tau = \frac{1 + \sqrt{-7}}{2}$. Consider when $p = 7$. Since $\frac{P^*(\tau)}{(\Omega_{-7}^*)^2} = \frac{3}{\sqrt{7}}$ is not an algebraic integer, the canonical transcendental factor $\Omega_{-7}^*$ is not an ideal choice of $\Omega_{\tau}$. However, choosing $\Omega_{\tau} = \frac{\Omega_{-7}^*}{7^{1/4}}$ ensures that the $t_f(\tau; n) = (\partial^nf)(\tau)/\Omega_{\tau}^{2n+k}$ are algebraic integers for all $n$ because $\frac{P^*(\tau)}{\Omega_{\tau}^2} = 3$, $\frac{Q(\tau)}{\Omega_{\tau}^4} = 105$ and $\frac{R(\tau)}{\Omega_{\tau}^6} = 1323$ are algebraic---in fact, rational---integers. For example, the Taylor series of the discriminant $\Delta$ at $\tau$ is given by
\begin{align*}
(1 - w)^{-12}\Delta\left(\frac{\tau - \overline{\tau}w}{1 - w}\right) =& -343\Omega_{\tau}^{12} - 1029\Omega_{\tau}^{14}(2\pi \sqrt{7} w) - 343\Omega_{\tau}^{16}\frac{(2 \pi \sqrt{7} w)^{2}}{2!} \\
&+ 7203\Omega_{\tau}^{18}\frac{(2 \pi \sqrt{7} w)^3}{3!} + \ldots + t_{f}(\tau; 50)\Omega_{\tau}^{112}\frac{(2 \pi \sqrt{7} w)^{50}}{50!}.
\end{align*}
Computation shows that
\[t_f(\tau; 50) = -3^{11} \cdot 5^5 \cdot 7^{11} \cdot 31 \cdot 113 \cdot 184997 \cdot 265541063 \cdot 46132277325870502334416643.\]
As predicted by Theorem \ref{weak}, $t_f(\tau; 50) \equiv 0 \pmod {7^2}$.
\end{example}
 
\subsection{Modular forms mod $p^m$}

For the rest of the paper, we will take $p$ to be a fixed prime number satisfying $p \geq 5$, and fix $f$ to be a modular form of weight $k$ with integral Fourier coefficients. Given a quasimodular form $g$ with integer Fourier coefficients, we let $\overline{g} \in (\mathbb{Z}/p\mathbb{Z})[[q]]$ be the image of its Fourier series under reduction mod $p$. By the famous results of Von-Staudt Clausen and Kummer, we have the following congruences (see \cite{L76}, Chapter 10, Theorem 7.1).
\begin{lemma} \label{cong}
We have that
\begin{equation*}
\overline{E}_{p-1} \equiv 1  \pmod {p}
\end{equation*}
and
\begin{equation*}
\overline{E}_{p+1} \equiv \overline{P}  \pmod {p}.
\end{equation*}
\end{lemma}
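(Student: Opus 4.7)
The plan is to verify each congruence by examining the Fourier expansion
\[
E_k = 1 - \frac{2k}{B_k}\sum_{n\geq 1} \sigma_{k-1}(n) q^n
\]
directly, using classical results on Bernoulli numbers to control the rational coefficient $-2k/B_k$ modulo $p$.

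For the first congruence, I would apply the Von Staudt--Clausen theorem, which says that $B_{p-1} + \sum_{(q-1)\mid(p-1)} \tfrac{1}{q}$ is an integer, the sum ranging over primes $q$ with $(q-1)\mid(p-1)$. Since $(p-1)\mid(p-1)$, the prime $p$ itself contributes $\tfrac{1}{p}$ to the sum, while the other terms are $p$-integral. Therefore $pB_{p-1} \equiv -1 \pmod{p}$, which means $B_{p-1}$ has $p$-adic valuation exactly $-1$ and $\tfrac{2(p-1)}{B_{p-1}} \in p\mathbb{Z}_p$. Hence every Fourier coefficient of $E_{p-1}$ after the constant term $1$ is divisible by $p$, giving $\overline{E}_{p-1}\equiv 1 \pmod{p}$.

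For the second congruence, I would combine Fermat's little theorem with the Kummer congruences. Fermat gives $d^p \equiv d \pmod{p}$ for every integer $d$, so summing over divisors yields $\sigma_p(n) \equiv \sigma_1(n) \pmod{p}$ for all $n$. Next, Kummer's congruences apply to the pair of indices $p+1$ and $2$: both are positive and even, $p+1 \equiv 2 \pmod{p-1}$, and neither is divisible by $p-1$ since $p \geq 5$. They give $\tfrac{B_{p+1}}{p+1} \equiv \tfrac{B_2}{2} = \tfrac{1}{12} \pmod{p}$, and inverting produces $\tfrac{2(p+1)}{B_{p+1}} \equiv 24 \pmod{p}$. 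Comparing Fourier expansions coefficient by coefficient with $P = E_2 = 1 - 24\sum_{n\geq 1}\sigma_1(n)q^n$ then yields $\overline{E}_{p+1}\equiv \overline{P} \pmod{p}$.

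The only real obstacle is invoking the right classical inputs; once Von Staudt--Clausen and the Kummer congruences are in hand, both statements reduce to a direct computation of $-2k/B_k$ modulo $p$ for $k = p-1$ and $k = p+1$. A minor point to verify carefully is that the hypotheses of Kummer's theorem hold for the pair $(p+1, 2)$, which follows immediately from $p\geq 5$ and hence $p-1 \geq 4 > 2$.
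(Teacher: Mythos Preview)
Your proposal is correct and follows exactly the approach the paper indicates: the paper does not give a proof at all but simply attributes the lemma to ``the famous results of Von-Staudt Clausen and Kummer'' with a reference to Lang's book, and your argument is precisely the standard derivation from those two inputs. The details you supply (Von Staudt--Clausen forcing $v_p(B_{p-1})=-1$, and the Kummer congruence applied to the pair $(p+1,2)$ together with Fermat on $\sigma_p$) are the expected ones, and your check that $p\geq 5$ ensures the Kummer hypotheses is exactly what is needed.
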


Let $G \in \mathbb{Z}_{(p)}[P, Q, R]$ be the expression for $g$ as a polynomial in $P, Q, R$, where $\mathbb{Z}_{(p)}$ is the ring of integers localized at $p$. We denote by $\overline{G}$ the image of $G$ in $(\mathbb{Z}/p^m\mathbb{Z})[P,Q,R]$, well-defined since there is a canonical map $\mathbb{Z}_{(p)}\rightarrow \mathbb{Z}/p^m\mathbb{Z}$ for all $m$.

\begin{remark}
While it is true that $\overline{G}=\overline{H}$ implies that $\overline{g}=\overline{h}$, it is not the case that $\overline{g}=\overline{h}$ implies that $\overline{G}=\overline{H}$. For example, $Q$ and $QR$ have power series that are congruent mod $7$, but $Q \neq QR$ as polynomials. Because of this important distinction, throughout this paper, we will be careful to keep track of which ring we are working in. 
\end{remark}
Let $A_p(Q, R) = E_{p-1}$ and $B_p(Q, R) = E_{p+1}$ in the polynomial ring $\mathbb{Z}_{(p)}[Q, R]$.  Because we have fixed $p$, we will drop the subscripts and write $A=A_{p}$ and $B=B_{p}$.  For $\overline{f} \in (\mathbb{Z}/p^m\mathbb{Z})[[q]]$, we define the \textit{filtration} $w(\overline{f})$ to be the least integer $k'$ such that there exists a modular form $g$ of weight $k'$ with $\overline{f} = \overline{g}$. By a result of Swinnerton-Dyer (see \cite{SD73}, Theorem 2), we have the following lemma.

\begin{lemma} \label{Ser1}
We have that $w(\overline{f}) < k$ if and only if $\overline{A}^{p^{m-1}}$ divides $\overline{F}$.
\end{lemma}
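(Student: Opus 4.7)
The plan is to proceed by induction on $m$, with the $m=1$ case being Swinnerton-Dyer's classical theorem in \cite{SD73}.

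For sufficiency in the inductive step ($\overline{A}^{p^{m-1}} \mid \overline{F} \Rightarrow w(\overline{f}) < k$), I would lift the divisibility to a congruence $F \equiv A^{p^{m-1}} H \pmod{p^m}$ in $\mathbb{Z}_{(p)}[Q,R]$, taking $H$ to be the weight-$(k - p^{m-1}(p-1))$ graded piece of any lift. The key fact needed is the power-series congruence $E_{p-1}^{p^{m-1}} \equiv 1 \pmod{p^m}$. Writing $E_{p-1} = 1 + p\varphi$ as in Lemma~\ref{cong} and expanding $(1+p\varphi)^{p^{m-1}}$ by the binomial theorem, each term $\binom{p^{m-1}}{i}(p\varphi)^i$ with $i \geq 1$ has $p$-adic valuation at least $(m-1-v_p(i)) + i \geq m$, using Kummer's formula $v_p\bigl(\binom{p^n}{i}\bigr) = n - v_p(i)$. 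Hence $f \equiv h \pmod{p^m}$ as power series, so $w(\overline{f}) \leq k - p^{m-1}(p-1) < k$.

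For necessity ($w(\overline{f}) < k \Rightarrow \overline{A}^{p^{m-1}} \mid \overline{F}$), suppose $f \equiv g \pmod{p^m}$ with $g$ of weight $k' < k$. The essential ingredient is the strengthened weight congruence $(p-1)p^{m-1} \mid (k-k')$---a $p^m$-refinement of the classical mod-$p$ statement that congruent modular forms have weights congruent modulo $p-1$. I would establish this by iteratively comparing $f$ with $g \cdot E_{p-1}^{(k-k')/(p-1)}$ using the binomial expansion of $E_{p-1}^{(k-k')/(p-1)}$ modulo $p^2, p^3, \ldots, p^m$, together with the saturation of $M_k(\mathbb{Z}_{(p)})$ inside $\mathbb{Z}_{(p)}[[q]]$ (standard for $p\geq 5$) and the nonvanishing of $\overline{\varphi}$ in $\mathbb{F}_p[[q]]$ (a direct consequence of von~Staudt--Clausen, giving $\overline{\varphi} \equiv -2\sum_{n\geq 1}\sigma_{p-2}(n)q^n$). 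Granting this, set $\ell = (k-k')/(p-1) = p^{m-1} j$; then $E_{p-1}^\ell = (E_{p-1}^{p^{m-1}})^j \equiv 1 \pmod{p^m}$ by the sufficiency calculation, so $f \equiv g \cdot E_{p-1}^\ell \pmod{p^m}$ as power series. Both sides are weight-$k$ modular forms over $\mathbb{Z}_{(p)}$, so the saturation property upgrades this to the polynomial congruence $F \equiv G \cdot A^\ell \pmod{p^m}$ in $\mathbb{Z}_{(p)}[Q,R]$, whence $\overline{A}^{p^{m-1}} \mid \overline{F}$ since $A^\ell = A^{p^{m-1}} \cdot A^{p^{m-1}(j-1)}$.

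The main obstacle is the weight divisibility $(p-1)p^{m-1} \mid (k-k')$, which goes beyond the classical mod-$p$ theory and requires careful tracking of higher-order binomial terms; once it is in place, the remaining arguments reduce to bookkeeping via the saturation property.
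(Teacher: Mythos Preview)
The paper does not give its own proof of this lemma; it simply attributes the statement to Swinnerton-Dyer (\cite{SD73}, Theorem~2) and moves on. There is therefore no argument in the paper to compare yours against.

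Your reconstruction is the standard one and is correct at the level of outline. The sufficiency direction is complete as written: the power-series congruence $E_{p-1}^{p^{m-1}}\equiv 1\pmod{p^m}$ follows from the binomial expansion exactly as you describe, and a homogeneous lift of $\overline{H}$ furnishes the witness of weight $k-p^{m-1}(p-1)$. For necessity you have correctly isolated the real content, namely Serre's weight congruence $(p-1)p^{m-1}\mid(k-k')$ for forms congruent modulo $p^m$; this is Th\'eor\`eme~1 of \cite{S72} (already in the paper's bibliography), so you may simply cite it rather than rederiving it. Once that is in hand, your last step---using that $M_k(\mathbb{Z}_{(p)})$ is saturated in $\mathbb{Z}_{(p)}[[q]]$ for $p\geq 5$ to upgrade the power-series congruence $f\equiv g\cdot E_{p-1}^{\ell}\pmod{p^m}$ to the polynomial congruence $F\equiv G\cdot A^{\ell}\pmod{p^m}$---is exactly the right mechanism. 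One small remark: strictly speaking, Swinnerton-Dyer's Theorem~2 is the $m=1$ statement, and the general $m$ version is really due to Serre in \cite{S72}; your inductive framing, with Swinnerton-Dyer as the base case and Serre's weight congruence driving the induction, is thus more precise than the paper's citation.
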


\begin{example}
In the previous remark, we saw that $QR$ has power series congruent to $Q$ mod $7$, so it has filtration less than its weight. This is implied by the above lemma as $A = R$ divides $QR$.
\end{example}

\subsection{Differential Operators mod $p^m$}

The derivative $D$ is defined by
\begin{equation}
Df := \frac{1}{2\pi i} \frac{d}{dz}f = q \frac{d}{dq}f = \sum{na_nq^n},
\end{equation}
where the factor of $\frac{1}{2 \pi i}$ is used to maintain integrality of Fourier coefficients. 
\begin{remark} 
A key property of $D$ that we will use is the following: as power series, $D^{p^m}f \equiv D^{p^{m-1}}f \pmod {p^m}$ for all positive integers $m$. This follows immediately from Euler's theorem, for if $p \nmid n$ then the $n^{\text{th}}$ Fourier coefficient is multiplied by $n^{\phi(p^m)} = n^{p^{m-1}(p-1)} \equiv 1 \pmod {p^m}$ between $D^{p^{m-1}}f$ and $D^{p^m}f$, and if $p \mid n$ then the $n^{\text{th}}$ Fourier coefficient of $D^{p^{m-1}}f$ is a multiple  of $p^{p^{m-1}}$ and hence vanishes mod $p^m$. 
\end{remark}

By a result of Ramanujan, $D$ is a derivation on the ring of quasimodular forms, $\mathbb{Z}_{(p)}[P, Q, R]$, that satisfies
\begin{equation} \label{deriv}
DP = \frac{P^2 - Q}{12}, \quad DQ = \frac{PQ - R}{3}, \quad DR = \frac{PR - Q^2}{2}.
\end{equation}
That is, $\mathbb{Z}_{(p)}[P, Q, R]$ is closed under differentiation by $D$. The non-holomorphic derivative is defined by
\begin{equation}
\partial_k := Df - \frac{k}{4\pi \text{Im}(z)},
\end{equation}
and sends almost holomorphic modular forms of weight $k$ to almost holomorphic modular forms of weight $k + 2$. The following lemma gives information about the relationship between these two differential operators.

\begin{lemma} \label{relate}
If $F(P,Q,R)=D^{n}f$ is a polynomial for $D^nf$ in $P,Q,R$, then $\partial^{n}f=F(P^{*},Q,R)$.
\end{lemma}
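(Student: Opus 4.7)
The plan is to prove this by induction on $n$, after first establishing two preliminary facts: (i) the operator $\partial$ is a weight-graded derivation, and (ii) on the generators $P^*, Q, R$ the action of $\partial$ is obtained from \eqref{deriv} by the formal substitution $P \mapsto P^*$.

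For (i), since $\partial_k = D - k/(4\pi\,\mathrm{Im}(z))$ and $D$ is an ordinary derivation on smooth functions, a direct check shows that if $F$ and $G$ have weights $k_1$ and $k_2$ then $\partial_{k_1+k_2}(FG) = (\partial_{k_1}F)G + F(\partial_{k_2}G)$; the two ``correction'' terms combine correctly because $k_1 + k_2$ is the weight of the product. For (ii), I would write $P = P^* + 3/(\pi\,\mathrm{Im}(z))$, expand $DP, DQ, DR$ from \eqref{deriv} in terms of $P^*$, and subtract the appropriate multiple of $1/(4\pi\,\mathrm{Im}(z))$ to form $\partial P^*, \partial Q, \partial R$. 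The required calculus input is $D(1/\mathrm{Im}(z)) = 1/(4\pi\,\mathrm{Im}(z)^2)$ (computed via $D = \frac{1}{2\pi i}\partial_z$ and $\partial_z\,\mathrm{Im}(z) = 1/(2i)$); once this is plugged in, all non-holomorphic terms cancel and one gets
\[
\partial P^* = \tfrac{(P^*)^2 - Q}{12}, \quad \partial Q = \tfrac{P^*Q - R}{3}, \quad \partial R = \tfrac{P^*R - Q^2}{2},
\]
precisely the Ramanujan relations with $P$ replaced by $P^*$.

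With these in hand, I would run the induction. The base case $n=0$ is trivial: since $f$ is modular, $F \in \mathbb{Z}_{(p)}[Q,R]$ does not involve $P$, so $F(P^*,Q,R) = F = f = \partial^0 f$. For the inductive step, write $D^n f = F(P,Q,R)$ so that $D^{n+1} f = F_P\cdot DP + F_Q\cdot DQ + F_R\cdot DR$ by the ordinary chain rule. By the inductive hypothesis $\partial^n f = F(P^*, Q, R)$; applying the graded Leibniz rule of (i) monomial-by-monomial gives
\[
\partial^{n+1} f = F_{P^*}(P^*,Q,R)\cdot\partial P^* + F_Q(P^*,Q,R)\cdot\partial Q + F_R(P^*,Q,R)\cdot\partial R.
\]
By (ii), the right-hand side is exactly the polynomial $F_P\cdot DP + F_Q\cdot DQ + F_R\cdot DR$ evaluated with $P \mapsto P^*$, i.e., the polynomial representation of $D^{n+1}f$ with $P$ replaced by $P^*$, completing the induction.

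The only genuinely non-routine step is (ii), and specifically the computation of $\partial P^*$, since it is the lone generator involving the non-holomorphic correction; the match between the correction $-k/(4\pi\,\mathrm{Im}(z))$ in $\partial_k$ and the ``extra'' terms produced by differentiating $3/(\pi\,\mathrm{Im}(z))$ is the crux of why the substitution principle works. Once that cancellation is verified, the rest is purely formal manipulation of polynomials in three algebraically independent variables.
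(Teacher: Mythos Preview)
Your proposal is correct and follows essentially the same approach as the paper: both proceed by induction on $n$, both rely on the fact that $\partial$ is a derivation whose action on the generators $P^*, Q, R$ is given by the Ramanujan relations \eqref{deriv} with $P$ replaced by $P^*$, and both reduce the inductive step to this substitution principle. The paper simply packages your chain-rule argument as the commutation relation $\phi \circ D = \partial \circ \phi$ (where $\phi$ sends $P \mapsto P^*$), which is exactly what your explicit comparison of $D^{n+1}f$ and $\partial^{n+1}f$ via the partial derivatives $F_P, F_Q, F_R$ is verifying.
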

\begin{proof}
We induct on $n$. When $n=0$ there is nothing to prove. It is easy to show that the differential operator $\partial$ is a derivation that sends $Q$ to $\frac{P^{*}Q-R}{3}$, $R$ to $\frac{P^{*}R-Q^{2}}{2}$, and $P^{*}$ to $\frac{P^{*2}-Q}{12}$. Let $\phi$ be the map that sends $P$ to $P^{*}$. By \eqref{deriv}, we have $\phi \circ D = \partial \circ \phi$. Now suppose $\partial^{n}f = \phi D^nf$. This implies that $\partial^{n+1}f=\partial \partial^{n}f=\partial \phi D^{n}f=\phi D^{n+1}f$. Hence, the lemma is valid for $n+1$, and inducting is true for all $n$.
\end{proof}

Another important relationship between $D$ and $\partial$ is given by the following equation (see \cite{Z}, Section 5.1, Equation 56): For all nonnegative integers $n$, we have
\begin{equation}  \label{zag.eq56}
\partial^nf = \sum_{r = 0}^n{\left(\frac{-1}{4\pi \text{Im}(z)}\right)^{r}{n \choose r}\frac{(k + n -1)!}{(k + n - r - 1)!}D^{n - r}f}.
\end{equation}

We define a differential operator $\theta$ in the ring of modular forms by:
\[\theta f := \frac{BQ-AR}{3}\cdot\frac{\partial f}{\partial Q} + \frac{BR - AQ^2}{2}\cdot\frac{\partial f}{\partial R},\]
where $\partial$ is the (formal) partial derivative in the polynomial ring of quasimodular forms.
It sends modular forms of weight $k$ to modular forms of weight $k + p + 1$. Lemma \ref{cong} and \eqref{deriv} together show that $\theta f$ has power series congruent to $D f$ mod $p$. The following result of Serre (see \cite{S72}, Section 2.2, Lemme 1) describes the filtration of modular forms mod $p$ under the action of $\theta$.

\begin{lemma}
For $\overline{f}$ in the ring of modular forms mod $p$, we have the following results:
\begin{enumerate}
\item If $w(\overline{f})\not\equiv 0 \pmod p$, then $w(\theta(\overline{f}))=w(\overline{f})+p+1$.
\item If $w(\overline{f})\equiv 0 \pmod p$, then $w(\theta(\overline{f})) \leq w(\overline{f})+2$.
\end{enumerate}
\end{lemma}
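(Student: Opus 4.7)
The plan is to take a weight-minimal modular form representative $g$ of $\overline{f}$ and study the polynomial $\theta g \in \mathbb{Z}_{(p)}[Q,R]$ modulo $A = E_{p-1}$. Setting $k' = w(\overline{f})$, I fix $g$ of weight $k'$ with $\overline{g} = \overline{f}$ as power series. Because $\theta$ sends modular forms of weight $k'$ to modular forms of weight $k'+p+1$, and because $\overline{A} \equiv 1$ and $\overline{B} \equiv \overline{P} \pmod{p}$ as power series by Lemma \ref{cong}, the power series reduction of $\theta g$ mod $p$ equals $\theta\overline{f}$. This already yields the general bound $w(\theta\overline{f}) \leq k'+p+1$. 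By Lemma \ref{Ser1} with $m=1$, the strict inequality $w(\theta\overline{f}) < k'+p+1$ is equivalent to the polynomial divisibility $\overline{A} \mid \overline{\theta g}$ in $\mathbb{F}_p[Q,R]$.

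The central computation reduces $\theta g$ modulo $A$ directly from its definition: the terms carrying an explicit factor of $A$ vanish, leaving
\[\theta g \equiv \frac{B}{6}\!\left(2Q\frac{\partial g}{\partial Q} + 3R\frac{\partial g}{\partial R}\right) \pmod{A}.\]
Since $g$ is homogeneous of weight $k'$ with $Q,R$ of weights $4,6$, Euler's identity gives $4Q\,\partial_Q g + 6R\,\partial_R g = k'\,g$, so $2Q\,\partial_Q g + 3R\,\partial_R g = k'g/2$. Substituting and reducing mod $p$ (noting $12$ is a unit since $p \geq 5$), I obtain
\[\overline{\theta g} \equiv \frac{k'}{12}\,\overline{B}\,\overline{g} \pmod{\overline{A}}\quad\text{in }\mathbb{F}_p[Q,R].\]

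For part (1), when $p \nmid k'$, I need $\overline{A} \nmid \overline{B}\,\overline{g}$. I would invoke the classical irreducibility of $\overline{E}_{p-1}$ in $\mathbb{F}_p[Q,R]$; primality then reduces matters to showing $\overline{A}\nmid \overline{B}$ and $\overline{A}\nmid \overline{g}$ separately. The former holds because otherwise $\overline{B}/\overline{A}$ would be a nonzero homogeneous polynomial of weight $2$ in $Q,R$, impossible as $Q,R$ have weights $4,6$; the latter contradicts the minimality of $g$ via Lemma \ref{Ser1}. This forces $w(\theta\overline{f}) = k'+p+1$. For part (2), if $p \mid k'$, the congruence above gives $\overline{A} \mid \overline{\theta g}$, so $\overline{\theta g} = \overline{A}\cdot\overline{h}$ for some polynomial $\overline{h}$ of weight $k'+2$; any lift $h$ is then a weight-$(k'+2)$ modular form whose Fourier expansion agrees with $\theta\overline{f}$ mod $p$ (using $\overline{A}\equiv 1$ as power series), yielding $w(\theta\overline{f}) \leq k'+2$. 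The main obstacle is the irreducibility of $\overline{E}_{p-1}$, a genuine theorem that I would cite rather than reprove; everything else is a direct computation with the explicit definition of $\theta$ plus Euler's relation.
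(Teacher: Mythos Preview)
The paper does not prove this lemma; it simply cites Serre (\cite{S72}, Section 2.2, Lemme 1). Your approach is in fact essentially Serre's original argument, and the key computation $\overline{\theta g} \equiv \frac{k'}{12}\,\overline{B}\,\overline{g} \pmod{\overline{A}}$ via Euler's relation is correct, as is your treatment of part (2).

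There is, however, a genuine error in your handling of part (1): $\overline{E_{p-1}}$ is \emph{not} irreducible in $\mathbb{F}_p[Q,R]$ in general. For $p=11$ the space of weight-$10$ modular forms is one-dimensional, so $E_{10} = E_4E_6 = QR$ and hence $\overline{A} = \overline{Q}\cdot\overline{R}$ is visibly reducible. Your deduction ``$\overline{A}$ prime, $\overline{A}\nmid\overline{B}$, $\overline{A}\nmid\overline{g}$, therefore $\overline{A}\nmid\overline{B}\,\overline{g}$'' is thus unjustified. The correct classical input is that $\overline{A}$ and $\overline{B}$ are \emph{coprime} in $\mathbb{F}_p[Q,R]$ (this is in Swinnerton-Dyer \cite{SD73} and is what Serre actually uses); since $\mathbb{F}_p[Q,R]$ is a UFD, $\overline{A}\mid\overline{B}\,\overline{g}$ together with $\gcd(\overline{A},\overline{B})=1$ forces $\overline{A}\mid\overline{g}$, contradicting the minimality of $g$. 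With this single correction your argument goes through and coincides with Serre's proof.
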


\subsection{Rankin-Cohen brackets}
Although the derivative of a modular form is not generally modular, we have seen that the obstruction to modularity can be corrected by the non-holomorphic derivative operator. More generally, in Section 7 of \cite{C75} Cohen defines the \emph{Rankin-Cohen brackets} $[\cdot, \cdot]_{n}$ as bilinear forms on the space of modular forms that, given modular forms of weight $k$ and $k'$, return modular forms of weight $k+k'+2n$. More precisely, he proves the following theorem as Corollary 7.2:
\begin{theorem}[Rankin-Cohen]
If $f$ and $g$ are modular forms of weight $k$ and $k'$ respectively, then the $n^{th}$ Rankin-Cohen bracket $[f,g]_{n}$, defined by
\begin{equation*}
[f, g]_n = \sum_{\substack{r, s \geq 0 \\ r+s = n}}{(-1)^r {k+n-1 \choose s} {k' + n - 1 \choose r}(D^rf)(D^sg)},
\end{equation*}
is a modular form of weight $k+k'+2n$.
\end{theorem}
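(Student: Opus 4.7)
The plan is to use the non-holomorphic derivative $\partial$ to construct an almost holomorphic lift of the bracket and then show that this lift already equals $[f,g]_n$. Define
\[\Phi_n := \sum_{\substack{r,s \geq 0 \\ r+s=n}} (-1)^r \binom{k+n-1}{s}\binom{k'+n-1}{r} (\partial^r f)(\partial^s g).\]
Because $\partial$ raises the weight of an almost holomorphic modular form by $2$, the factor $\partial^r f$ is almost holomorphic modular of weight $k+2r$ and $\partial^s g$ is almost holomorphic modular of weight $k'+2s$, so each summand---and hence $\Phi_n$ itself---is almost holomorphic modular of weight $k+k'+2n$.

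The next step is to show $\Phi_n=[f,g]_n$. Using equation \eqref{zag.eq56}, I would expand each $\partial^r f$ and $\partial^s g$ as a polynomial in $Y:=-1/(4\pi\,\text{Im}(z))$ whose coefficients are $\mathbb{Q}$-linear combinations of $D^j f$ and $D^j g$. Substituting into $\Phi_n$ and collecting by powers of $Y$, the $Y^0$-coefficient is by inspection exactly $[f,g]_n$, so it suffices to verify that the coefficient of $Y^\ell$ vanishes for every $\ell\geq 1$. Reindexing by $u$ and $v$ (so that $Y^{n-u-v}$ multiplies $D^uf\cdot D^vg$), and applying trinomial revision to $\binom{k+n-1}{v+s'}\binom{v+s'}{v}$ and $\binom{k'+n-1}{u+r'}\binom{u+r'}{u}$, the vanishing reduces to the combinatorial identity
\[\sum_{r'+s'=\ell}(-1)^{r'}\binom{A+\ell-1}{s'}\binom{B+\ell-1}{r'}(A)_{r'}(B)_{s'}=0\qquad(\ell\geq 1),\]
where $(X)_j := X(X+1)\cdots(X+j-1)$ is the rising factorial and $A=k+u$, $B=k'+v$.

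This identity, which appears to be the main obstacle, in fact collapses via the observation $\binom{A+\ell-1}{\ell-r'}(A)_{r'}=(A)_\ell/(\ell-r')!$ (and analogously in $B$): the left-hand side becomes $\frac{(A)_\ell(B)_\ell}{\ell!}\sum_{r'=0}^\ell(-1)^{r'}\binom{\ell}{r'}=\frac{(A)_\ell(B)_\ell}{\ell!}(1-1)^\ell$, which vanishes for $\ell\geq 1$. So the only genuine content beyond the setup is this one-line cancellation.

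With $\Phi_n=[f,g]_n$ established, this common function is simultaneously almost holomorphic modular of weight $k+k'+2n$ and, from its right-hand description, holomorphic on the upper half plane. Writing $\Phi_n=\sum_j a_j(Q,R)(P^*)^j$ and expanding via $P^*=P-3/(\pi\,\text{Im}(z))$, holomorphicity together with the algebraic independence of $P,Q,R,1/\text{Im}(z)$ forces $a_j=0$ for $j\geq 1$. Hence $\Phi_n=a_0(Q,R)$ is a polynomial in $Q$ and $R$ of weight $k+k'+2n$, i.e., a modular form of weight $k+k'+2n$, completing the proof.
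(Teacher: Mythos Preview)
The paper does not actually prove this theorem; it simply quotes it as Corollary~7.2 of Cohen's paper \cite{C75}. Your argument is therefore not to be compared with a proof in the paper, but it is a correct and self-contained proof that fits nicely within the paper's framework.

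A couple of remarks on the details. Your combinatorial reduction is right: after the trinomial revision you indicate, one has $n-1-v=u+\ell-1$ and $n-1-u=v+\ell-1$, so the inner sum really does take the form $\sum_{r'+s'=\ell}(-1)^{r'}\binom{A+\ell-1}{s'}\binom{B+\ell-1}{r'}(A)_{r'}(B)_{s'}$ with $A=k+u$, $B=k'+v$; the identity $\binom{A+\ell-1}{\ell-r'}(A)_{r'}=(A)_\ell/(\ell-r')!$ (and its $B$-analogue) then collapses the sum to $\frac{(A)_\ell(B)_\ell}{\ell!}(1-1)^\ell=0$ exactly as you say. For the final step, invoking ``algebraic independence of $P,Q,R,1/\mathrm{Im}(z)$'' is slightly informal; the clean version is that if $\Phi_n=\sum_{j\le J}a_j(Q,R)(P^*)^j$ with $a_J\neq 0$, then the coefficient of $y^{-J}$ in the expansion via $P^*=P-3/(\pi y)$ is $(-3/\pi)^J a_J(Q,R)$, which is nonzero, so holomorphicity forces $J=0$. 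This is exactly the conclusion you draw.

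This approach---replacing $D$ by $\partial$ to obtain an almost holomorphic modular expression and then showing that the non-holomorphic pieces cancel via a binomial identity---is a standard and elegant route to the Rankin--Cohen theorem (it appears, for instance, in Zagier's work), and it has the virtue here of using only tools already set up in the paper: the operator $\partial$, equation~\eqref{zag.eq56}, and the description of almost holomorphic modular forms as polynomials in $P^*,Q,R$.
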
 \label{RC}

\section{Differential operators mod $p$ and mod $p^2$}


\subsection{Differential operators mod $p$}
We now develop several results about the action of differential operators on modular forms and quasimodular forms mod $p$. In doing so, we will connect our two notions of derivative, first as a formal derivation on our polynomial rings in $P,Q,R$, and second as a operation on formal power series mod $p$. The important result in this section are Lemma \ref{D^pf is modular}, which shows that the operator $D^p$ preserves modularity mod $p$, and Lemma 3.6, which gives useful divisibility properties of modular forms under certain repeated applications of $D^p$. Except where otherwise noted, all of the following lemmas apply in the ring $(\mathbb{Z}/p\mathbb{Z})[P, Q, R]$. 

\begin{proposition} \label{D^{np-k+1}}
Given $p$ and $k$, let $n$ be the unique integer such that $0 \leq np - k + 1 < p$. Then $D^{np - k + 1}f$ is congruent mod $p$ to a modular form of weight $2np - k + 2$.
\end{proposition}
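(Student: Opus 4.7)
Set $N := np - k + 1$. The plan is to identify, modulo $p$, the power series $D^N f$ with a specific modular form of weight $k + 2N = 2np - k + 2$.

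The key ingredient is the identity
\[
D^j f \;=\; \sum_{r=0}^{j} \binom{j}{r}\frac{(k+j-1)!}{(k+j-r-1)!}\left(\frac{1}{4\pi \text{Im}(z)}\right)^{r}\partial^{j-r} f,
\]
which is proved exactly as \eqref{zag.eq56}, but using the relation $D = \partial + k/(4\pi \text{Im}(z))$ (applied iteratively, with the weight increasing by $2$ at each step) in place of $\partial = D - k/(4\pi \text{Im}(z))$. Equivalently, it is the inverse of \eqref{zag.eq56}, viewed as a lower triangular change of basis between $\{D^{j-r}f\}$ and $\{\partial^{j-r}f\}$. Substituting $1/(4\pi \text{Im}(z)) = (P-P^*)/12$ and applying Lemma \ref{relate}, both sides become polynomials in $\mathbb{C}[P, P^*, Q, R]$; setting $P^* = 0$ then yields
\[
D^j f \;=\; \sum_{r=0}^{j} \binom{j}{r}\frac{(k+j-1)!}{(k+j-r-1)!}\left(\frac{P}{12}\right)^{r} H_{j-r},
\]
where $H_i := \partial^i f|_{P^* = 0}$. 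Since $\partial^i f$ is a homogeneous polynomial in $P^*, Q, R$ of weight $k+2i$, the specialization $H_i$ is a homogeneous polynomial in $Q, R$ of weight $k+2i$, and hence a modular form of that weight.

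Now I specialize to $j = N$. Because $k + N - 1 = np$, the falling factorial
\[
\frac{(k+N-1)!}{(k+N-r-1)!} = (np)(np-1)\cdots(np-r+1)
\]
is divisible by $p$ for every $r \geq 1$; and $12^r$ is a unit modulo $p$ since $p \geq 5$. Every term with $r \geq 1$ therefore vanishes mod $p$, leaving
\[
D^N f \;\equiv\; H_N \pmod{p}
\]
as power series. The right-hand side is a modular form of weight $k + 2N = 2np - k + 2$, which proves the proposition.

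The main technical point is the inverse identity above; once it is in hand, the rest reduces to the short divisibility check that every $r \geq 1$ coefficient contains the factor $np$. Justifying the inverse identity as a polynomial identity in $\mathbb{C}[P,P^*,Q,R]$ (so that the substitution $P^* = 0$ is legitimate) is routine given Lemma \ref{relate}, since the derivation of \eqref{zag.eq56} itself works formally in this ring.
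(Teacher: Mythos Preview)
Your proof is correct and takes a genuinely different route from the paper's.

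The paper argues via the Rankin--Cohen bracket $[f,B]_{N}$ with $B=E_{p+1}$: expanding the bracket, every term except the one with $s=0$ picks up the factor $\binom{np}{s}\equiv 0\pmod p$ (here $0<s<p$), leaving $[f,B]_N\equiv c\,(D^Nf)\,B\pmod p$ for a unit $c$; since both the bracket and $B$ lie in $(\mathbb{Z}/p\mathbb{Z})[Q,R]$, one cancels $B$ in the domain $(\mathbb{Z}/p\mathbb{Z})[P,Q,R]$ to conclude. Your argument instead inverts \eqref{zag.eq56} to write $D^Nf$ as a polynomial in $P$ whose coefficients are the ``constant terms'' $H_i=\partial^if\big|_{P^*=0}\in\mathbb{Z}_{(p)}[Q,R]$, and then kills every $r\ge 1$ term by the single observation that the falling factorial begins with $k+N-1=np$. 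This is more self-contained (no Rankin--Cohen input is needed) and has the pleasant by-product of exhibiting the witnessing modular form explicitly as $H_N$. The paper's approach, on the other hand, slots naturally into the bracket machinery it reuses in Proposition~\ref{D^{np^2-k+1}} and Lemma~\ref{D^{p^2}modular}. Two small remarks: your congruence $D^Nf\equiv H_N$ in fact holds in $(\mathbb{Z}/p\mathbb{Z})[P,Q,R]$, not merely as power series, which is what the later applications need; and your justification for treating the inverted identity as a polynomial identity in $P,P^*,Q,R$ is the same formal manoeuvre the paper itself performs in the proof of Lemma~\ref{weakprop} (the map $P\mapsto 0$, $T\mapsto P/12$), so it is on equally firm footing.
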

\begin{proof}
Evaluating the Rankin-Cohen bracket, we find
\begin{align*}
[f, B]_{np - k + 1} &= \! \! \! \! \! \! \sum_{\substack{r, s \geq 0 \\ r+s = np - k + 1}} \! \! \!  \! \! \!  \!  {(-1)^r {np \choose s} {(n+ 1)p - k + 1 \choose r }}(D^rf)(D^sB)\\
&\equiv {np \choose 0}{(n + 1)p - k + 1 \choose np - k + 1}(D^{np - k + 1}f)B \pmod p.
\end{align*}
The left-hand side is a modular form and $B$ is a modular form. Since the second binomial coefficient is a unit mod $p$ (the upstairs term is between $p$ and $2p$, so it is not divisible by $p$), we must have that $D^{np - k + 1}f$ is congruent mod $p$ to a modular form.
\end{proof}

\begin{corollary} \label{D^2A}
We have $D^2A \equiv 0 \pmod p$.
\end{corollary}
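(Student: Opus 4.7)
My plan is to apply Proposition~3.1 to the Eisenstein series $A = E_{p-1}$, which has weight $k = p - 1$. For this $k$, the unique integer $n$ with $0 \leq np - k + 1 < p$ is $n = 1$, and then $np - k + 1 = 2$. The proposition therefore produces a homogeneous polynomial $G \in (\mathbb{Z}/p\mathbb{Z})[Q, R]$ of weight $2np - k + 2 = p + 3$ satisfying $D^{2} A \equiv G \pmod{p}$ in $(\mathbb{Z}/p\mathbb{Z})[P, Q, R]$.

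I would next show that the power series of $G$ is trivial modulo $p$. By Lemma \ref{cong}, the power series of $A$ is congruent to $1$ mod $p$, so the power series of $D^2 A$---and hence that of $G$---is congruent to $0$ mod $p$. In particular, the filtration of this power series is strictly less than the weight $p + 3$.

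Finally, I would invoke Lemma \ref{Ser1} with $m = 1$ to conclude that $A$ divides $G$ in $(\mathbb{Z}/p\mathbb{Z})[Q, R]$. The quotient is homogeneous of weight $(p+3) - (p-1) = 4$; since the only nonnegative integer solution to $4a + 6b = 4$ is $(a, b) = (1, 0)$, the quotient equals $cQ$ for some $c \in \mathbb{Z}/p\mathbb{Z}$. The power series of $cQ$ has constant term $c$, whereas the power series of $G/A$ must be trivial mod $p$ (since $G$ has trivial power series and $A$ has power series $1$). This forces $c = 0$, so $G = 0$ as a polynomial, giving $D^2 A \equiv 0 \pmod{p}$ in $(\mathbb{Z}/p\mathbb{Z})[P, Q, R]$.

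There is no serious obstacle here; the only point requiring care is the distinction between congruences of formal power series and congruences in the polynomial ring $(\mathbb{Z}/p\mathbb{Z})[P, Q, R]$. The natural map between these two rings is not injective---as witnessed by the very relation $\overline{E}_{p-1} \equiv 1 \pmod{p}$---so one must extract the additional polynomial-level information from the filtration lemma rather than simply reading it off from power series.
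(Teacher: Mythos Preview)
Your proof is correct and follows essentially the same approach as the paper: apply Proposition~3.1 with $k=p-1$ to see that $D^2A$ is modular of weight $p+3$ mod $p$, and use Lemma~\ref{cong} to see that its $q$-expansion vanishes mod $p$. The paper's proof stops there, leaving implicit the standard fact that a weight-$k$ polynomial in $Q,R$ whose $q$-expansion vanishes mod $p$ must itself vanish mod $p$; you instead spell this out carefully via Lemma~\ref{Ser1} and a weight count, which is a nice touch given the paper's own emphasis on distinguishing power-series congruences from polynomial congruences.
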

\begin{proof}
The second derivative $D^2A$ is a modular form mod $p$ by Proposition \ref{D^{np-k+1}} and has power series congruent to zero mod $p$ by Lemma \ref{cong}.
\end{proof}

\begin{lemma}  \label{D^pf is modular}
The $p^{\text{th}}$ derivative $D^{p}f$ is a modular form mod $p$.
\end{lemma}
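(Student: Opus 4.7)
The plan is to reduce the statement, via the Fourier-coefficient congruence $D^{p}f\equiv Df\pmod p$, to a comparison with Serre's theta operator $\theta$ introduced in Section 2. First I would invoke the remark at the start of Section 2.2, which asserts that as power series $D^{p^{m}}f\equiv D^{p^{m-1}}f\pmod{p^{m}}$ for every positive integer $m$; the case $m=1$ gives
\[\overline{D^{p}f}=\overline{Df} \quad \text{in } (\mathbb{Z}/p\mathbb{Z})[[q]].\]
Thus it suffices to exhibit a modular form whose reduction mod $p$ agrees with the $q$-expansion of $Df$.

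Next I would use the algebraic operator $\theta$ defined in Section 2. By Lemma \ref{cong}, $\overline{A}\equiv 1$ and $\overline{B}\equiv\overline{P}\pmod p$; substituting these into the defining formula for $\theta$ and comparing with the Ramanujan identities \eqref{deriv} gives $\overline{\theta f}=\overline{Df}$ as power series mod $p$, which is precisely the observation recorded in the text just before Serre's filtration lemma. Since $\theta f$ is by construction a genuine modular form of weight $k+p+1$, chaining the two congruences yields $\overline{D^{p}f}=\overline{\theta f}$, which is the desired conclusion.

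There is essentially no obstacle: both ingredients are already established above. The only point to keep straight is the terminological distinction flagged in the paper, namely that "$D^{p}f$ is a modular form mod $p$" refers to the reduction of its Fourier expansion, not to any identity in the polynomial ring $\mathbb{Z}_{(p)}[P,Q,R]$ (where $D^{p}f$ genuinely involves $P$). If one wished to avoid invoking $\theta$, an alternative in the spirit of Proposition \ref{D^{np-k+1}} would be to evaluate the Rankin--Cohen bracket $[f,A]_{p}$: the congruence $\overline{A}\equiv 1$ forces $D^{s}A\equiv 0\pmod p$ for every $s\geq 1$, so only the $r=p$ term survives mod $p$, and Lucas's theorem gives $\binom{2p-2}{p}\equiv 1\pmod p$, realizing $\overline{D^{p}f}$ as the reduction of an explicit modular form of weight $k+3p-1$.
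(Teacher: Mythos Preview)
Your main argument has a genuine gap: you have misread what the lemma asserts. As the opening sentence of Section~3 states, the results of this section are claims in the ring $(\mathbb{Z}/p\mathbb{Z})[P,Q,R]$. Concretely, ``$D^{p}f$ is a modular form mod $p$'' means that the polynomial $D^{p}f\in\mathbb{Z}_{(p)}[P,Q,R]$ reduces to an element of $(\mathbb{Z}/p\mathbb{Z})[Q,R]$, i.e.\ one can write $D^{p}f=M+pG(P)$ with $M$ a genuine modular form. This is exactly how the lemma is used later (see the proof of Proposition~\ref{p^2sharp}, which begins ``By Lemma~\ref{D^pf is modular}, we can write $D^{p}f=M+pG(P)$\ldots'', and the proof of Proposition~\ref{Abar}, which needs $D^{p}f$ to be in $(\mathbb{Z}/p\mathbb{Z})[Q,R]$ in order to invoke Lemma~\ref{Ser1}). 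Your proof via $\theta$ establishes only the $q$-expansion congruence $\overline{D^{p}f}=\overline{\theta f}$, which is the trivial direction of the remark you cite; it does \emph{not} show that the $P$-dependence of $D^{p}f$ disappears mod $p$. Indeed, if the lemma meant only a $q$-expansion statement it would be vacuous: every quasimodular form is $q$-congruent to a modular form (replace $P$ by $B$ and pad with powers of $A$).

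Your alternative via $[f,A]_{p}$ is on the right track---it is exactly the paper's argument in the case $k\equiv 1\pmod p$---but your justification ``$\overline{A}\equiv 1$ forces $D^{s}A\equiv 0\pmod p$'' is again only a $q$-series statement and is false in the polynomial ring for $s=1$ (where $DA$ has weight $p+1$ and is not the zero polynomial mod $p$). The correct bookkeeping is: the $s=1$ term dies because $\binom{2p-2}{p-1}\equiv 0\pmod p$ by Lucas, and the terms with $s\geq 2$ die in the polynomial ring by Corollary~\ref{D^2A}. With those fixes the bracket argument does prove the lemma for all $k$; the paper instead handles $k\not\equiv 1\pmod p$ by applying Proposition~\ref{D^{np-k+1}} twice.
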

\begin{proof}
We consider the cases when $k \equiv 1 \pmod p$ and when $k \not \equiv 1 \pmod p$ separately. First suppose $k \not \equiv 1 \pmod p$. Pick $n$ such that $0 \leq np - k + 1 < p$. By Proposition \ref{D^{np-k+1}}, we have $D^{np - k + 1}f$ is congruent mod $p$ to a modular form of weight $2np - k + 2$. Now pick $m$ such that $0 \leq mp - (2np - k + 2) + 1 < p$. Proposition \ref{D^{np-k+1}} implies $D^{mp - (2np - k + 2) + 1}(D^{np - k + 1}f) = D^pf$ is a modular form mod $p$. 

Now suppose $k \equiv 1 \pmod p$. We evaluate the Rankin-Cohen bracket
\begin{align*}
[f, A]_{p} &= \sum_{\substack{r, s \geq 0 \\ r+s = p}}{(-1)^r {k+p-1 \choose s} {2p - 2 \choose r}(D^rf)(D^sA)} \\
&\equiv {k+p-1 \choose 0}{2p-2 \choose p}(D^pf)A \pmod {p}.
\end{align*}
The left-hand side is a modular form and $A$ is a modular form, so $D^pf$ must be a modular form mod $p$.
\end{proof}

We now use the modularity of $D^pf$ mod $p$ and $D^{np - k + 1}f$ mod $p$ to prove results about divisibility by $A$ mod $p$ by finding modular forms of different weights with congruent power series (see Lemma \ref{Ser1}). In the following propositions and subsequent lemma, these modular forms of different weights will come from applying the $\theta$ operator.

\begin{proposition} \label{Abar} 
We have $D^{rp}f \equiv A^r \theta^r f \pmod p$.
\end{proposition}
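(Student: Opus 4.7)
The plan is to prove the congruence at the level of $q$-expansions modulo $p$ by comparing both sides to $\overline{D^{r} f}$.

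My first step would be to verify that the operators $\theta$ and $D$ agree modulo $p$ on modular forms at the power-series level. For a modular form $g$ (a polynomial in $Q, R$ only, so $\partial g/\partial P = 0$), the Ramanujan formulas \eqref{deriv} specialize to
\[
Dg = \frac{PQ - R}{3} \cdot \frac{\partial g}{\partial Q} + \frac{PR - Q^{2}}{2} \cdot \frac{\partial g}{\partial R}.
\]
Lemma \ref{cong} gives $\overline{A} \equiv 1$ and $\overline{B} \equiv \overline{P}$ modulo $p$, so reducing the definition of $\theta g$ modulo $p$ recovers exactly the expression above. Hence $\overline{\theta g} \equiv \overline{Dg} \pmod{p}$. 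Since $\theta$ preserves modularity (it raises weight by $p+1$), a straightforward induction on $r$ then gives $\overline{\theta^{r} f} \equiv \overline{D^{r} f} \pmod{p}$ for every $r \geq 0$.

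My second step would be to invoke the remark immediately following the definition of $D$: Euler's theorem implies $\overline{D^{p} g} \equiv \overline{Dg} \pmod{p}$ for any integer-coefficient power series $g$. Iterating this (using that $D$ preserves mod-$p$ congruences of $q$-expansions) yields $\overline{D^{rp} f} \equiv \overline{D^{r} f} \pmod{p}$.

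Chaining the two steps together with $\overline{A}^{r} \equiv 1 \pmod{p}$ (again Lemma \ref{cong}) then produces
\[
\overline{A^{r}\theta^{r} f} \;\equiv\; \overline{\theta^{r} f} \;\equiv\; \overline{D^{r} f} \;\equiv\; \overline{D^{rp} f} \pmod{p},
\]
which is exactly the claimed proposition. I do not foresee any substantial obstacle in this approach; the only minor point meriting care is the induction $\overline{\theta^{r} f} \equiv \overline{D^{r} f}$, which relies on the fact that $\theta$ preserves modularity (so the base identity $\overline{\theta g}\equiv\overline{Dg}$ applies at each stage to $\theta^{r-1}f$) and the observation that congruence of $q$-expansions modulo $p$ is stable under $D = q\,d/dq$.
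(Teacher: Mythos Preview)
Your chain of $q$-expansion congruences is correct, but it proves strictly less than what the proposition asserts. The paper explicitly declares (start of Section~3) that these lemmas take place in the polynomial ring $(\mathbb{Z}/p\mathbb{Z})[P,Q,R]$, and the Remark after Lemma~\ref{cong} warns precisely against the inference you are making: equality of $q$-expansions mod $p$ does \emph{not} imply equality as polynomials in $P,Q,R$. Your final display therefore establishes only $\overline{D^{rp}f}=\overline{A^{r}\theta^{r}f}$ in $(\mathbb{Z}/p\mathbb{Z})[[q]]$, which is the easy direction. The hard content of the proposition is the polynomial identity, and this is what the downstream applications need: Lemma~\ref{ideallemma} uses Proposition~\ref{Abar} together with Proposition~\ref{moredivisibility} to conclude that a specific power of $A$ divides $D^{p^{2}-ip}f$ \emph{as a polynomial}, which has no meaning at the power-series level since $\overline{A}\equiv 1$ there.

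What is missing is exactly the bridge from power series back to polynomials. The paper's proof supplies it via Lemma~\ref{D^pf is modular}: one first knows that $D^{p}g$ lies in $(\mathbb{Z}/p\mathbb{Z})[Q,R]$ (not merely $[P,Q,R]$) for any modular form $g$, so both $D^{p}g$ and $A\theta g$ are genuine modular forms of the same weight $2p+\mathrm{wt}(g)$. The Swinnerton--Dyer structure theorem underlying Lemma~\ref{Ser1} then gives injectivity of the $q$-expansion map on forms of a fixed weight mod $p$, upgrading the power-series congruence to a polynomial one. Your argument can be salvaged by adding precisely these two ingredients (modularity of $D^{rp}f$ via Lemma~\ref{D^pf is modular}, plus the weight check and the injectivity statement), but as written the proof has a genuine gap.
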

\begin{proof}
We use induction on $r$. Let $r = 1$. We have $D^pf \equiv Df$ as power series by Fermat's Little Theorem. Furthermore, $Df \equiv \theta f$ as power series by the definition of $\theta$ and the fact that $A$ has power series congruent to $1$ mod $p$.
Since $D^pf$ has weight $2p + k$ and $\theta f$ has wight $k + p + 1$, we have that $A$ divides $D^pf$ mod $p$, and in fact, $D^pf \equiv A \theta f \pmod p$.

Now suppose $D^{p(r-1)}f \equiv A^{r-1}\theta^{r-1}f \pmod p$. Then 
\begin{equation*}
D^{rp}f \equiv D^p(A^{r-1}\theta^{r-1}f) \equiv A^{r-1}D^p(\theta^{r-1}f) \equiv A^{r}\theta^rf \pmod p. 
\end{equation*}
\end{proof}

\begin{proposition} \label{moredivisibility}
Given $p$ and $k$, let $n$ be the unique integer such that $0 \leq np - k + 1 < p$. Then $\theta^{np-k+1} f$ is an element of the ideal $(A^{np-k+1}, p)$ in $\mathbb{Z}_{(p)}[P,Q,R]$.
\end{proposition}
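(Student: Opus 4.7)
The plan is to exploit a gap between the weight of $\theta^{j}f$ (where $j:=np-k+1$) as a polynomial in $Q,R$ and its filtration as a mod-$p$ $q$-expansion, and then peel off factors of $\overline{A}$ one at a time using Lemma \ref{Ser1}. The first step is to check that $\theta^{j}f\equiv D^{j}f\pmod p$ as $q$-expansions: Lemma \ref{cong} gives $\overline{A}\equiv 1$ and $\overline{B}\equiv \overline{P}$, so substituting into the definition of $\theta$ and comparing with the Ramanujan formulas \eqref{deriv} shows $\theta h\equiv Dh\pmod p$ as $q$-expansions for any quasimodular $h$, and an easy induction on $j$ yields the claim. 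Combined with Proposition \ref{D^{np-k+1}}, this shows $w(\overline{\theta^{j}f})\leq k+2j$.

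Next comes the iteration. As a modular form, $\theta^{j}f\in \mathbb{Z}_{(p)}[Q,R]$ has weight $k+j(p+1)$, while its filtration is at most $k+2j<k+j(p+1)$ (assuming $j\geq 1$). Lemma \ref{Ser1} with $m=1$ therefore gives $\overline{A}\mid \overline{\theta^{j}f}$ in $\mathbb{F}_{p}[Q,R]$; lifting, I obtain $h_{1}\in\mathbb{Z}_{(p)}[Q,R]$ of weight $k+j(p+1)-(p-1)$ with $\theta^{j}f\equiv Ah_{1}\pmod{p}$. Because $\overline{A}\equiv 1$ as a $q$-expansion, $\overline{h_{1}}$ has the same $q$-expansion as $\overline{\theta^{j}f}$, hence filtration $\leq k+2j$ again. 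Repeating, at the $i$th step the necessary inequality is $k+2j<k+j(p+1)-i(p-1)$, which simplifies to $i<j$. So Lemma \ref{Ser1} applies for exactly $j$ consecutive steps, yielding $h_{j}\in\mathbb{Z}_{(p)}[Q,R]$ of weight $k+2j$ with $\theta^{j}f\equiv A^{j}h_{j}\pmod{p}$. This gives the desired $\theta^{j}f\in (A^{j},p)$ in $\mathbb{Z}_{(p)}[Q,R]\subset\mathbb{Z}_{(p)}[P,Q,R]$; the case $j=0$ is trivial, since $(A^{0},p)$ is the unit ideal.

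The step requiring the most care is the bookkeeping in the iteration, since Lemma \ref{Ser1} operates on polynomials but only sees information encoded by the $q$-expansion. The subtlety singled out in the remark after Lemma \ref{Ser1}---that congruence of $q$-expansions is strictly weaker than congruence of polynomials---is precisely what Lemma \ref{Ser1} resolves, at the cost of producing an extra factor of $A$ each time. The weight-filtration gap $j(p-1)$ happens to be exactly $p-1$ times the number of factors of $A$ that must be extracted, so the process terminates cleanly and produces the factor $A^{j}$, not $A^{j-1}$ or $A^{j+1}$.
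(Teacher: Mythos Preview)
Your argument is correct and follows the same approach as the paper's: both use Proposition \ref{D^{np-k+1}} to bound the filtration of $\overline{\theta^{j}f}$ by $k+2j$, compare with the actual weight $k+j(p+1)$, and invoke Lemma \ref{Ser1}; the paper simply compresses your iteration into the single weight-difference computation $\frac{j(p+1)-2j}{p-1}=j$. One small slip: the congruence $\theta h\equiv Dh\pmod p$ does \emph{not} hold for arbitrary quasimodular $h$ (the operator $\theta$ has no $\partial/\partial P$ term, whereas $D$ does), but it holds for modular $h$, and that is all your induction needs since each $\theta^{i}f$ lies in $\mathbb{Z}_{(p)}[Q,R]$.
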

\begin{proof}
We have that the weight of $\theta^{np-k+1}f = k + (p+1)(np-k + 1)$. Because $D^{np - k + 1}$ is a modular form mod $p$, we have $D^{np - k + 1}f \equiv \theta^{np-k + 1}f$ mod $p$. That is, there exists a modular form of weight $2np - k + 2$ which is congruent to $\theta^{np - k + 1}$ mod $p$. Since $2np - k + 2 < k + (p+1)(np-k+ 1)$ we have that $\overline{A}$ divides $\theta^{p-k+1}$ with multiplicity given by 
\begin{equation*}
\frac{k + (p+1)(np-k + 1) - (2np - k + 2)}{p - 1} = np - k + 1. 
\end{equation*}
\end{proof}

\begin{lemma} \label{ideallemma}
For all $i \leq k - 1$, we have $D^{p^2 - ip}f$ is in the ideal $(A^{2p - k + 1 - i}, p)$ within $\mathbb{Z}_{(p)}[P,Q,R]$.
\end{lemma}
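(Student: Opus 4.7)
The plan is to deduce the inclusion by chaining together Proposition~\ref{Abar}, which relates $D^{rp}f$ to $A^r\theta^rf \pmod p$, and Proposition~\ref{moredivisibility}, which supplies an initial factor of $A^{p-k+1}$ inside $\theta^{p-k+1}f$. Applying Proposition~\ref{Abar} with $r = p-i$ first reduces the claim to showing that $A^{p-k+1}$ divides $\theta^{p-i}f$ modulo $p$. Since $i \leq k-1$, we have $p-i \geq p-k+1$, and in the relevant regime $k \leq p+1$ Proposition~\ref{moredivisibility} (with $n=1$) produces some $g \in \mathbb{Z}_{(p)}[Q,R]$ with $\theta^{p-k+1}f \equiv A^{p-k+1}g \pmod p$. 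Thus $\theta^{p-i}f = \theta^{k-1-i}(\theta^{p-k+1}f) \equiv \theta^{k-1-i}(A^{p-k+1}g) \pmod p$, and the remaining question is whether the outer $\theta^{k-1-i}$ preserves the factor $A^{p-k+1}$.

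The key intermediate fact, and the main obstacle, is the polynomial identity $\theta A \equiv 0 \pmod p$ in $\mathbb{Z}_{(p)}[Q,R]$. To establish it, note that $\theta A$ has weight $2p$, and since $\overline{A} \equiv 1$ and $\overline{B} \equiv \overline{P} \pmod p$, its Fourier series agrees with that of $DA \equiv D(1) \equiv 0 \pmod p$. Applying Lemma~\ref{Ser1} once produces $h_1 \in \mathbb{Z}_{(p)}[Q,R]$ of weight $p+1$ with $\theta A \equiv A h_1 \pmod p$; the Fourier series of $h_1$ is then also zero mod $p$, so a second application of Lemma~\ref{Ser1} writes $h_1 \equiv A h_2 \pmod p$ with $h_2$ of weight $2$. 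Because monomials $Q^a R^b$ have weight $4a+6b$, no nonzero polynomial in $Q,R$ has weight $2$, forcing $h_2 = 0$ and hence $\theta A \equiv 0 \pmod p$.

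With $\theta A \equiv 0 \pmod p$ in hand, the derivation property of $\theta$ on $\mathbb{Z}_{(p)}[Q,R]$ yields $\theta(A^j u) = jA^{j-1}(\theta A)u + A^j\theta u \equiv A^j\theta u \pmod p$, and so by induction
\[
\theta^{k-1-i}(A^{p-k+1}g) \equiv A^{p-k+1}\theta^{k-1-i}g \pmod p.
\]
Combining this with the first step gives
\[
D^{p^2 - ip}f \equiv A^{p-i}\theta^{p-i}f \equiv A^{p-i}\cdot A^{p-k+1}\theta^{k-1-i}g = A^{2p-k+1-i}\theta^{k-1-i}g \pmod p,
\]
placing $D^{p^2-ip}f$ in the ideal $(A^{2p-k+1-i}, p) \subseteq \mathbb{Z}_{(p)}[P,Q,R]$, as desired.
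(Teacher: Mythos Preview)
Your argument is correct and follows the same route as the paper: apply Proposition~\ref{Abar} with $r=p-i$, split $\theta^{p-i}f=\theta^{k-1-i}\theta^{p-k+1}f$, and invoke Proposition~\ref{moredivisibility} to extract the factor $A^{p-k+1}$. You go further than the paper by isolating and proving the intermediate fact $\theta A\equiv 0\pmod p$, which is exactly what is needed to guarantee that the outer $\theta^{k-1-i}$ preserves the $A^{p-k+1}$ factor; the paper's proof uses this implicitly without comment. Your two-step descent via Lemma~\ref{Ser1} to reach weight~$2$ is a clean way to see it.
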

\begin{proof}
Working mod $p$, Proposition \ref{Abar} gives
\begin{align*}
D^{p^2 - ip}f \equiv A^{p-i}\theta^{p-i}f \equiv A^{p-i}\theta^{k-i-1}\theta^{p-k+1}f  \pmod p,
\end{align*}
and Proposition \ref{moredivisibility} implies $A^{2p-k + 1 - i}$ divides $D^{p^2 - ip}$ mod $p$.
\end{proof}

Our above lemmas are only concerned with modular forms. The following proposition and corollary instead prove some useful results about differentiation of our simplest quasimodular form, $P$. 

\begin{proposition} \label{D^pP modular mod p}
The form $D^{p}P$ is modular mod $p$.
\end{proposition}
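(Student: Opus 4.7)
The plan is to exploit the congruence $\bar{E}_{p+1}\equiv\bar{P}\pmod p$ from Lemma \ref{cong} to replace $P$, which is only quasimodular, with the genuine modular form $B=E_{p+1}$ at the level of power series, and then quote Lemma \ref{D^pf is modular}.

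Concretely, I would argue as follows. Since the operator $D=q\,\frac{d}{dq}$ acts coefficientwise on $q$-expansions, reduction mod $p$ commutes with $D$. Therefore, from Lemma \ref{cong} the power-series congruence $\overline{E_{p+1}}\equiv\overline{P}\pmod p$ yields, after iterating $D$ a total of $p$ times, the congruence
\[
\overline{D^{p}P}\equiv\overline{D^{p}E_{p+1}}\pmod p
\]
as elements of $(\mathbb{Z}/p\mathbb{Z})[[q]]$. Now $E_{p+1}$ is an honest holomorphic modular form (of weight $p+1\geq 6$, since $p\geq 5$) with integral Fourier coefficients, so Lemma \ref{D^pf is modular} applied to $f=E_{p+1}$ shows that $D^{p}E_{p+1}$ is congruent mod $p$ to a modular form. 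Combining the two congruences, there is a modular form $g$ with $\overline{D^{p}P}=\bar g$, which is exactly the claim that $D^{p}P$ is modular mod $p$.

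There is no real obstacle here: the only subtlety is keeping straight the distinction, emphasized in the paper's earlier remark, between equality of quasimodular forms as polynomials in $P,Q,R$ and congruence of their $q$-expansions. We are working purely at the level of $q$-expansions mod $p$, where the replacement $P\leadsto E_{p+1}$ is legitimate, and that is precisely what converts the non-modular $P$ into a setting where Lemma \ref{D^pf is modular} applies.
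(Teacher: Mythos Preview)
There is a genuine gap. As the preamble to Section~3 makes explicit, the results there (including this proposition) are statements in the polynomial ring $(\mathbb{Z}/p\mathbb{Z})[P,Q,R]$: ``$D^pP$ is modular mod $p$'' means that $D^pP$, viewed as a weight-$(2p+2)$ element of $(\mathbb{Z}/p\mathbb{Z})[P,Q,R]$, actually lies in the subring $(\mathbb{Z}/p\mathbb{Z})[Q,R]$, i.e.\ has degree $0$ in $P$. Your argument only shows the weaker $q$-series statement that $\overline{D^pP}=\overline{g}$ in $(\mathbb{Z}/p\mathbb{Z})[[q]]$ for some modular form $g$. These are not the same: for instance, $AP-B\in(\mathbb{Z}/p\mathbb{Z})[P,Q,R]$ is a homogeneous quasimodular form of weight $p+1$ whose $q$-expansion vanishes mod $p$ (so it agrees with the modular form $0$), yet it visibly has degree $1$ in $P$. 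In fact your own example $P\leftrightsquigarrow B$ already illustrates the failure: $P$ itself has $q$-series congruent to the modular form $B$, but $P$ is certainly not ``modular mod $p$'' in the polynomial sense.

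The polynomial statement is exactly what is needed downstream. Corollary~\ref{D^{p^2}P} wants $D^{p^2}P\in(A^{2p},p)$ inside $\mathbb{Z}_{(p)}[P,Q,R]$, and its proof invokes Lemma~\ref{Ser1} (Swinnerton--Dyer's filtration criterion), which is stated for modular forms $f$ and concerns the polynomial $\overline{F}\in(\mathbb{Z}/p^m\mathbb{Z})[Q,R]$; it does not apply to a quasimodular form that merely has modular $q$-expansion. The paper's proof therefore establishes the stronger claim directly: it uses $Q=P^2-12DP$ to write $D^pQ=2P\cdot D^pP-12D^{p+1}P$, notes via Lemma~\ref{D^pf is modular} that $D^pQ$ has degree $0$ in $P$, and then a leading-term comparison in $P$ forces $D^pP$ itself to have degree $0$ in $P$. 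Your power-series replacement $P\leadsto E_{p+1}$ bypasses precisely this structural information about degree in $P$, which is the content of the proposition.
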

\begin{proof}
Recall (see \eqref{deriv}) that $Q=P^{2}-12DP$. So, working mod $p$, we have  $D^{p}Q=2PD^{p}P-12D^{p+1}P$ is a modular form. Let $XP^{i}$ be the leading term in $D^{p} P$ mod $p$ as a polynomial in $P$. Then the leading term of $D^{p}Q$ as a polynomial in $P$ is $2XP^{i+1}-(2p+2-i)XP^{i+1}\equiv iXP^{i+1} \pmod p$. Because $D^{p}Q$ is a modular form mod $p$ (by Lemma \ref{D^pf is modular}, we have $i\equiv 0 \pmod p$. Since we cannot have $i\geq p$---the weight of $X$ would then be at most $2$---we must have $i=0$, so $D^{p}P$ is a modular form mod $p$.
\end{proof}

\begin{corollary} \label{D^{p^2}P}
We have $D^{p^2}P $ is in the ideal $(A^{2p}, p)$ within $\mathbb{Z}_{(p)}[P,Q,R]$.
\end{corollary}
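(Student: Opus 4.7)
The plan is to leverage Proposition~\ref{D^pP modular mod p}, which shows that $g := \overline{D^p P}$ lies in $(\mathbb{Z}/p\mathbb{Z})[Q,R]$ and is a modular form of weight $2p+2$ modulo $p$. Because reduction mod $p$ commutes with the derivation $D$, I rewrite $D^{p^2}P = D^{p(p-1)}(D^pP)$ and obtain $\overline{D^{p^2}P} = D^{p(p-1)}(g)$ in $(\mathbb{Z}/p\mathbb{Z})[P,Q,R]$. Applying Proposition~\ref{Abar} to the modular form $g$ with $r=p-1$ then gives $D^{p(p-1)}g \equiv A^{p-1}\theta^{p-1}g \pmod p$, so it suffices to prove the identification $\theta^{p-1}g \equiv A^{p+1}g \pmod p$ in $(\mathbb{Z}/p\mathbb{Z})[Q,R]$.

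For this I compare power series and weights. Since $\theta f \equiv Df \pmod p$ as power series (a consequence of Lemma~\ref{cong} together with the definition of $\theta$), iterating $p-1$ times shows that $\theta^{p-1}g$ and $D^{p-1}g$ have the same power series modulo $p$. The remark following the definition of $D$ gives $D^p P \equiv DP \pmod p$ as power series, so $g$ has the same power series as $DP$ modulo $p$; hence the power series of $D^{p-1}g$ equals that of $D^{p-1}(DP) = D^pP \equiv DP \pmod p$, which is again the power series of $g$. On the other hand, $A \equiv 1 \pmod p$ as a power series (Lemma~\ref{cong}), so $A^{p+1}g$ also shares this power series. The two modular forms $\theta^{p-1}g$ and $A^{p+1}g$ both lie in $(\mathbb{Z}/p\mathbb{Z})[Q,R]$ and are homogeneous of weight $(p+1)^2 = (2p+2)+(p-1)(p+1)$; by the q-expansion principle for modular forms modulo $p$ (which follows from iterating Lemma~\ref{Ser1}, since any modular form mod $p$ of positive weight with vanishing power series is divisible by $A$, and descending eventually forces the form to be zero), they must therefore agree as polynomials.

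Putting everything together, $\overline{D^{p^2}P} \equiv A^{p-1}\cdot A^{p+1}g = A^{2p}g \pmod p$, so $D^{p^2}P$ lies in the ideal $(A^{2p},p) \subseteq \mathbb{Z}_{(p)}[P,Q,R]$. The main subtlety is the power-series identification of $\theta^{p-1}g$ with $g$, which rests on Fermat's little theorem ($D^pP \equiv DP \pmod p$) applied at the level of Fourier expansions; everything else is routine weight bookkeeping and direct appeals to earlier results in the section.
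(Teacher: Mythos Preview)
Your proof is correct, but the paper's argument is considerably shorter. The paper simply observes that $D^{p^2}P$ and $D^pP$ have the same $q$-expansion mod $p$ (by the $D^{p^m}\equiv D^{p^{m-1}}$ remark), that both are modular forms mod $p$ (the latter by Proposition~\ref{D^pP modular mod p}, the former by then iterating Lemma~\ref{D^pf is modular}), and applies Lemma~\ref{Ser1} once to the weight difference $2p^2+2-(2p+2)=2p(p-1)$ to conclude $\overline{A}^{2p}\mid \overline{D^{p^2}P}$. You instead factor $D^{p^2}P=D^{p(p-1)}(D^pP)$, pull out $A^{p-1}$ via Proposition~\ref{Abar}, and then run a separate same-weight comparison between $\theta^{p-1}g$ and $A^{p+1}g$ to extract the remaining $A^{p+1}$. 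Both approaches ultimately rest on the same filtration mechanism from Lemma~\ref{Ser1}; yours is more structural and in fact yields the sharper identity $\overline{D^{p^2}P}=A^{2p}\,\overline{D^pP}$ in $(\mathbb{Z}/p\mathbb{Z})[Q,R]$, while the paper's is a single direct weight comparison requiring less machinery.
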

\begin{proof}
As a power series, $D^{p^2}P \equiv D^pP \pmod p$, and by Proposition \ref{D^pP modular mod p}, both are modular forms mod $p$. Since $D^{p^2}P$ has weight $2p^2 + 2$ and $D^pP$ has weight $2p + 2$, we have $A$ must divide $D^{p^2}P$ with multiplicity at least $2p$.
\end{proof}

\subsection{Differential operators mod $p^2$}
Having developed the necessary machinery mod $p$ we now turn our attention to some results about modular forms mod ${p^2}$, that is, in the ring $(\mathbb{Z}/p^2\mathbb{Z})[P, Q, R]$. The following proposition and the consequent Lemma \ref{D^{p^2}modular} are analogous to Proposition \ref{D^{np-k+1}} and Lemma \ref{D^pf is modular} respectively.

\begin{proposition} \label{D^{np^2-k+1}}
Given $p$ and $k$, let $n$ be the unique integer such that $0 \leq np^2 - k + 1 < p^2$. Then $D^{np^2 - k + 1}f$ is congruent to a modular form mod ${p^2}$.
\end{proposition}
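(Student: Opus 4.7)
The plan is to mirror Proposition \ref{D^{np-k+1}} modulo $p^2$ via a Rankin-Cohen bracket. Set $N := np^2 - k + 1$; by Theorem \ref{RC}, $[f, E_{p^2+1}]_N$ is a modular form with expansion
\[[f, E_{p^2+1}]_N = \sum_{r+s=N}(-1)^r \binom{np^2}{s}\binom{(n+1)p^2 + 1 - k}{r}(D^r f)(D^s E_{p^2+1}).\]
The key combinatorial input is Kummer's formula: $v_p\binom{np^2}{s}$ equals the number of base-$p$ carries in $s + (np^2 - s) = np^2$. A short case analysis shows $v_p\binom{np^2}{s} \ge 2$ for all $0 < s < p^2$ with $p \nmid s$, and also for $s = jp$ when $p \mid n$; only $s = jp$ with $0 < j < p$ and $p \nmid n$ gives $v_p = 1$. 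Meanwhile $\binom{(n+1)p^2+1-k}{N}=\binom{N+p^2}{p^2}$ is a unit mod $p^2$ (since no carries arise in adding $p^2$ to $N$). So modulo $p^2$ only the term at $s = 0$ and (when $p \nmid n$) the terms at $s = jp$ with $0 < j < p$ survive, the latter carrying an explicit factor of $p$.

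The next step is to dispose of the $s = jp$ terms by showing that each product $(D^{N-jp}f)(D^{jp}E_{p^2+1})$ is congruent modulo $p$ to a modular form. Iterating Proposition \ref{Abar}, $D^{jp}E_{p^2+1} \equiv A^j \theta^j E_{p^2+1} \pmod p$ is modular mod $p$; similarly, starting from the modularity of $D^{m_0 p - k + 1}f$ mod $p$ guaranteed by Proposition \ref{D^{np-k+1}} and iterating Proposition \ref{Abar} once more shows $D^{N-jp}f$ is modular mod $p$. Multiplying by $p$ turns each such product into a modular form times $p$ mod $p^2$, and after subtracting these contributions from $[f, E_{p^2+1}]_N$ we obtain
\[c \cdot (D^N f) \cdot E_{p^2+1} \equiv M \pmod{p^2}\]
for a unit $c$ and a modular form $M$.

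The conclusion then proceeds exactly as in the proof of Proposition \ref{D^{np-k+1}}: viewing $(\mathbb{Z}/p^2\mathbb{Z})[P,Q,R] = (\mathbb{Z}/p^2\mathbb{Z})[Q,R][P]$, the polynomial $E_{p^2+1}$ is a non-zero-divisor in $(\mathbb{Z}/p^2\mathbb{Z})[Q,R]$, so if $(D^N f)\cdot E_{p^2+1}$ is a polynomial in $Q, R$ mod $p^2$ then so is $D^N f$, giving modularity mod $p^2$. The main obstacle I anticipate is the bridge between Fourier-series and polynomial congruences in the $s = jp$ step: Proposition \ref{Abar} and the iterations above produce power-series identities mod $p$, whereas the final $P$-degree argument operates in the polynomial ring. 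Overcoming this likely requires a careful application of Lemma \ref{Ser1} (or a polynomial-level strengthening of Proposition \ref{Abar}) to ensure that the $p$-weighted correction terms absorbed into $M$ are genuine polynomial-in-$Q,R$ contributions mod $p^2$, not merely Fourier-series congruent ones.
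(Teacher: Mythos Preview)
Your approach is essentially the paper's: bracket $f$ against a weight-$(p^2+1)$ form, kill most terms via $p$-divisibility of $\binom{np^2}{s}$, and handle the surviving $s\equiv 0\pmod p$ terms by noting that the factor of $p$ from the binomial combines with mod-$p$ modularity of the two derivative factors to give a modular form mod $p^2$. The paper uses $BA^p=E_{p+1}E_{p-1}^{\,p}$ rather than $E_{p^2+1}$; either works, but with $E_{p^2+1}$ you should check once that its Fourier coefficients are $p$-integral (Kummer's congruence gives $B_{p^2+1}/(p^2+1)\equiv B_2/2\pmod p$, so $B_{p^2+1}$ is a $p$-unit for $p\ge 5$).

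Two small points. First, your carry analysis is stated only for $0<s<p^2$; when $n\ge 2$ the index $s$ runs up to $N<np^2$, so you must also treat $s\ge p^2$ (the paper's sum over $0<i<np$ with $s=ip$ does this). Second, your anticipated obstacle is not one: the section containing Proposition~\ref{Abar} is explicitly set in $(\mathbb{Z}/p\mathbb{Z})[P,Q,R]$, and Lemma~\ref{D^pf is modular} and Proposition~\ref{D^{np-k+1}} are likewise polynomial statements, so iterating them already yields that $D^{N-jp}f$ and $D^{jp}E_{p^2+1}$ lie in $(\mathbb{Z}/p\mathbb{Z})[Q,R]$ as polynomials. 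Your final cancellation step then goes through exactly as you outline, since $E_{p^2+1}$ is nonzero in the domain $\mathbb{F}_p[Q,R]$ and hence a non-zero-divisor in $(\mathbb{Z}/p^2\mathbb{Z})[Q,R]$.
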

\begin{proof}
We evaluate the Rankin-Cohen bracket:
\begin{align*} 
&[f, BA^p]_{np^2 - k + 1} = \sum_{r+s=np^2 - k + 1}{(-1)^r{np^2 \choose s}{(n+1)p^2 - k + 1 \choose r}(D^rf)(D^s(BA^p))} \\
&\equiv \sum_{0 < i < np}{(-1)^{(np - i)p - k + 1}{np^2 \choose ip}{(n+1)p^2 - k + 1 \choose (np - i)p - k + 1}(D^{(np-i)p - k + 1}f)(D^{ip}(BA^p))} \\
&\qquad + (-1)^{np^2 - k + 1} {(n+1)p^2 - k + 1 \choose np^2 - k + 1}(D^{np^2 - k + 1}f)(BA^p) \pmod {p^2}.
\end{align*}
By Proposition \ref{D^{np-k+1}} and Lemma \ref{D^pf is modular}, the derivatives $D^{(np-i)p - k + 1}$ and $D^{ip}(BA^p)$ are modular mod $p$. Since every term accumulates a factor of $p$ from its first binomial coefficient except when $i=0$, every term after the first is a modular form mod $p^2$. By Theorem \ref{RC}, we have $[f, BA^p]_{np^2-k+1}$ is a modular form, so $D^{np^2 - k + 1}$ must be a modular form mod $p^2$.
\end{proof}

\begin{lemma} \label{D^{p^2}modular}
The form $D^{p^2}f$ is modular mod ${p^2}$.
\end{lemma}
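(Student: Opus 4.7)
The plan is to mimic the $k \equiv 1 \pmod p$ case of the proof of Lemma \ref{D^pf is modular}, but using the Rankin-Cohen bracket $[f, A^p]_{p^2}$ in place of $[f, A]_p$. By the Rankin-Cohen theorem, $[f, A^p]_{p^2}$ is a modular form (of weight $k + 3p^2 - p$), and I will show that as a power series modulo $p^2$ it is a unit multiple of $D^{p^2}f$. An advantage over the proof of Lemma \ref{D^pf is modular} is that no case split on $k$ modulo $p^2$ is needed: the argument is uniform in $k$.

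The key step is the observation that $A^p \equiv 1 \pmod{p^2}$ as a power series. By Lemma \ref{cong} we may write the power series of $A$ as $1 + p\alpha$ for some $\alpha \in \mathbb{Z}_{(p)}[[q]]$, and the binomial expansion gives
\begin{equation*}
A^p = 1 + p(p\alpha) + \sum_{j=2}^{p}\binom{p}{j}(p\alpha)^{j}.
\end{equation*}
The linear term is $p^{2}\alpha$; for $2 \leq j \leq p-1$ the coefficient $\binom{p}{j}$ is itself divisible by $p$, so $\binom{p}{j}p^{j}$ is divisible by $p^{j+1} \geq p^{3}$; and the $j = p$ term is divisible by $p^{p} \geq p^{2}$. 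Hence $A^p \equiv 1 \pmod{p^2}$ as a power series, and so $D^{s}(A^p) \equiv 0 \pmod{p^2}$ as a power series for every $s \geq 1$.

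Now expand the bracket as
\begin{equation*}
[f,A^p]_{p^{2}} = \sum_{r+s=p^{2}}(-1)^{r}\binom{k+p^{2}-1}{s}\binom{2p^{2}-p-1}{r}(D^{r}f)(D^{s}A^p).
\end{equation*}
Reducing modulo $p^2$ as power series, every term with $s \geq 1$ vanishes by the previous step, leaving only the $r=p^{2}$, $s=0$ contribution $(-1)^{p^{2}}\binom{2p^{2}-p-1}{p^{2}}(D^{p^{2}}f)A^p$. Using $A^p \equiv 1 \pmod{p^2}$ once more, and applying Kummer's theorem to check that $\binom{2p^{2}-p-1}{p^{2}}$ is a unit mod $p$---the base-$p$ digits of $p^{2}$ and $p^{2}-p-1$ are $(1,0,0)$ and $(0,p-2,p-1)$, whose sum involves no carries---we conclude
\begin{equation*}
[f,A^p]_{p^{2}} \equiv \pm\binom{2p^{2}-p-1}{p^{2}}\,D^{p^{2}}f \pmod{p^{2}}
\end{equation*}
as power series. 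Since the coefficient is a unit modulo $p^{2}$ and the left-hand side is a modular form, $D^{p^{2}}f$ is a modular form modulo $p^{2}$. The only genuinely substantive step is the binomial identity $A^p \equiv 1 \pmod{p^{2}}$; the rest is bookkeeping with the Rankin-Cohen formula and Kummer's theorem.
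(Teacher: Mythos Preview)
Your argument has a genuine gap stemming from the distinction the paper emphasizes between congruences of $q$-expansions and congruences in the polynomial ring $\mathbb{Z}_{(p)}[P,Q,R]$. Your key identity $A^p \equiv 1 \pmod{p^2}$ holds only for the $q$-expansions; as elements of $\mathbb{Z}_{(p)}[Q,R]$ the forms $A^p$ and $1$ are different (they do not even have the same weight). Consequently your reduction of $[f,A^p]_{p^2}$ kills the $s \geq 1$ terms only at the level of power series, and what you actually prove is that $D^{p^2}f$ has $q$-expansion congruent modulo $p^2$ to that of a modular form. That statement does \emph{not} imply that $D^{p^2}f$, viewed as a polynomial in $P,Q,R$, lies in $(\mathbb{Z}/p^2\mathbb{Z})[Q,R]$: for instance $PA$ and $B$ both have weight $p+1$ and identical $q$-expansions mod $p$, yet $PA$ genuinely involves $P$. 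But it is the polynomial statement that the paper means and needs---Proposition~\ref{p^2sharp} applies Lemma~\ref{Ser1} to $D^{p^2}f$, and that lemma is formulated only for modular forms, i.e.\ polynomials in $Q,R$.

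The paper's proof avoids this by working in the polynomial ring throughout. In the main case $k \not\equiv 1 \pmod{p^2}$ it first proves Proposition~\ref{D^{np^2-k+1}}: in the expansion of $[f,BA^p]_{np^2-k+1}$, each surviving term modulo $p^2$ carries one factor of $p$ from a binomial coefficient and one factor that is modular mod $p$ \emph{as a polynomial} (via Proposition~\ref{D^{np-k+1}} and Lemma~\ref{D^pf is modular}), hence is modular mod $p^2$ in $(\mathbb{Z}/p^2\mathbb{Z})[P,Q,R]$; two applications then give the lemma. In the (vacuous, since $k$ is even) case $k \equiv 1 \pmod{p^2}$, the paper combines $p \mid \binom{k+p^2-1}{s}$ for $s>0$ with the polynomial fact that $p \mid D^sA^p$ (immediate from the chain rule, as every derivative of $x^p$ acquires a factor of $p$). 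Your shortcut of replacing $A^p$ by $1$ discards precisely the bookkeeping in the polynomial ring that makes the argument work.
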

\begin{proof}
We consider the cases when $k \equiv 1 \pmod {p^2}$ and when $k \not \equiv 1 \pmod {p^2}$ separately. First suppose $k \not \equiv 1 \pmod {p^2}$. Let $n$ be the unique integer such that $0 \leq np^2 - k + 1 < p^2$. By Proposition \ref{D^{np^2-k+1}}, we have $D^{np^2 - k + 1}f$ is congruent to a modular form of weight $2np^2 - k + 2$ mod ${p^2}$. Now let $m$ be the unique integer such that $0 \leq mp^2 - (2np^2 - k + 2) + 1 < p^2$. Then Lemma \ref{D^{np^2-k+1}} implies $D^{mp^2 - (2np^2 - k + 2) + 1}(D^{np^2 - k + 1}f) = D^{p^2}f$ is a modular form mod ${p^2}$. 

Now suppose $k \equiv 1 \pmod {p^2}$. We have the following expansion of the Rankin-Cohen bracket:
\begin{align*}
[f, A^p]_{p^2} &= \sum_{\substack{r, s \geq 0 \\ r+s = p^2}}{(-1)^r {k+p^2-1 \choose s} {2p^2-p-1\choose r}(D^rf)(D^sA^p)}.
\intertext{When $s > 0$, the first binomial coefficient is divisible by $p$ and $D^sA^p$ is also divisible by $p$. So, working mod $p^2$, we have}
[f, A^p]_{p^2} &\equiv {k+p^2-1 \choose 0}{2p^2 - p - 1 \choose p^2}(D^{p^2}f)A^p \pmod {p^2}.
\end{align*}
By Theorem \ref{RC}, the left-hand side is a modular form, and since $A^p$ is modular form and $p$ does not divide ${2p^2 - p - 1 \choose p^2}$, we have $D^{p^2}f$ is a modular form mod ${p^2}$.
\end{proof}

\section{The ``valuation'' $v$ }

In this section, we define a function $v$ which behaves like a valuation with respect to the ideal $(A^p, p)$. The goal of this section is to understand the behavior of $v$ under repeated applications of the differential operator $D^{p^2}$. The important results are Lemma \ref{weakprop}, which gives a lower bound on $v(D^{mp^2}f)$ in terms of $m$, and Lemma \ref{sharpprop}, which gives a stronger lower bound under additional assumptions on $p, k$ and $m$.


We define the function $v:\mathbb{Z}_{(p)}[P,Q,R]\rightarrow \mathbb{Z}$ by
\begin{equation} \label{defv}
v(f)=\mathrm{sup}\{n \ | \ f\in(A^p,p)^n\}.
\end{equation}
In other words, $v(f)$ is the sum the $p$-adic valuation of $f$ and the supremum of the set of all nonnegative integers $i$ such that $f$ is expressible as $f = A^{pi}G$ for some quasimodular form $G$. 

Note that $v(Df) \geq v(f) \geq 0$ for all quasimodular forms $f$ and $v(fg) \geq v(f) + v(g)$ for all quasimodular forms $f$ and $g$.

\begin{remark} We have used quotation marks around the word ``valuation" because in general we do not have the equality $v(fg) = v(f) + v(g)$, so $v$ is not technically a valuation.
\end{remark}

\begin{example}
Let $p = 5$. Then $A = E_4 = Q$.
Consider $D^5A=\frac{35}{1296}P^5Q + \frac{175}{648}P^3Q^2 - \frac{175}{1296}P^4R + \frac{25}{432}PQ^3 - \frac{175}{648}P^2QR - \frac{35}{1296}Q^2R + \frac{25}{324}PR^2$. It is a multiple of $5$, as expected from Corollary \ref{D^2A}, but is neither a multiple of $A^5 = Q^5$ nor of $25$. So $v(D^5A)=1 \geq 0 = v(A)$.
\end{example}

\subsection{Important Facts About $v$}
Before we can prove our key lemmas, we need several facts about $v$ under a single application of the differential operator $D^{p^2}$. Proposition \ref{p^2sharp} gives a lower bound (independent of $f$) for $v$ under the differential operator $D^{p^2}$ and Proposition \ref{v(D^{p^2}A^p)geq3} gives a lower bound on $v(D^{p^2} A^p)$. These then allow us to bound $v(D^{p^2}f)$ in terms of $v(f)$ in Proposition \ref{key}, which will be important in proving Lemma \ref{weakprop}.

\begin{proposition} \label{p^2sharp}
We have $v(D^{p^2}f) \geq 2$.
\end{proposition}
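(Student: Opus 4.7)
The plan is to combine the modularity of $D^{p^2}f$ modulo $p^2$ (Lemma \ref{D^{p^2}modular}) with a filtration bound produced by the Rankin--Cohen bracket $[f, A^p]_p$, and then apply Lemma \ref{Ser1} twice. The key technical input is the power-series identity $D^s A^p \equiv 0 \pmod{p^2}$ for every $s \geq 1$: since $DA^p = pA^{p-1}DA$ and $A \equiv 1 \pmod p$ as a power series (Lemma \ref{cong}) forces $DA \equiv 0 \pmod p$ as a power series, we have $DA^p \equiv 0 \pmod{p^2}$ as a power series, and this persists under further applications of $D$, which preserves $p^2$-divisibility of power series.

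Using this vanishing, I would expand $[f, A^p]_p$ modulo $p^2$ as a power series: every term with $s \geq 1$ drops out, leaving only
\[
[f, A^p]_p \equiv (-1)^p \binom{p^2-1}{p} (D^p f) A^p \equiv (1-p)(D^p f) A^p \pmod{p^2},
\]
using the routine computation $\binom{p^2-1}{p} \equiv p - 1 \pmod{p^2}$. Since $A^p \equiv 1 \pmod{p^2}$ as a power series (expand $(1+pX)^p$) and $D^{p^2}f \equiv D^p f \pmod{p^2}$ by Fermat--Euler, inverting the unit $1 - p$ gives $D^{p^2}f \equiv (1+p)[f, A^p]_p \pmod{p^2}$ as power series, so $D^{p^2}f$ has filtration at most $k + p^2 + p$ modulo $p^2$.

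I would then apply Lemma \ref{Ser1} twice. First, modulo $p^2$ (valid since $k+p^2+p < k+2p^2$): $A^p \mid D^{p^2}f$ in $(\mathbb{Z}/p^2\mathbb{Z})[P, Q, R]$, so lifting gives $D^{p^2}f = A^p G + p^2 H$ with $G$ of weight $k+p^2+p$. Since $A^p \equiv 1 \pmod{p^2}$ as a power series, $G$ and $D^{p^2}f$ share the same reduction modulo $p$, which by Proposition \ref{Abar} agrees with the power series of $\theta f$ (weight $k+p+1$). A second application modulo $p$ (valid since $k+p+1 < k+p^2+p$) yields $A^p \mid G$ in $(\mathbb{F}_p)[P, Q, R]$, and writing $G = A^p G_1 + pG_2$ gives
\[
D^{p^2}f = A^{2p} G_1 + pA^p G_2 + p^2 H \in (A^{2p}, pA^p, p^2) = (A^p, p)^2,
\]
as required. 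The main obstacle is the first step, justifying the power-series vanishing of $D^s A^p$ modulo $p^2$ (as opposed to merely modulo $p$) and pinning down $\binom{p^2-1}{p} \pmod{p^2}$, so that the Rankin--Cohen identity can actually be inverted modulo $p^2$; once this is in place, the two applications of Lemma \ref{Ser1} are essentially bookkeeping.
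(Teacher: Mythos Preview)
Your argument is correct and reaches the same conclusion as the paper, but the two proofs build the key filtration witness differently. The paper writes $D^pf = M + pG(P)$ with $M$ modular (using Lemma~\ref{D^pf is modular}) and then manufactures the weight-$(k+p^2+p)$ modular form $A^pM + p\sum_i X_iA^{p-i}B^i$ by the substitution $P^i \mapsto A^{p-i}B^i$; you instead obtain a weight-$(k+p^2+p)$ witness directly from the Rankin--Cohen bracket $[f,A^p]_p$, exploiting the power-series vanishing $D^sA^p \equiv 0 \pmod{p^2}$ and the congruence $\binom{p^2-1}{p}\equiv p-1\pmod{p^2}$. Both routes feed the same filtration bound into Lemma~\ref{Ser1} at level $p^2$ to get $A^p \mid D^{p^2}f$. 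Your approach has the mild advantage of being more self-contained (it does not need the decomposition of $D^pf$ coming from Lemma~\ref{D^pf is modular}) and you are more explicit than the paper about the second descent: the paper stops at ``$A^p$ divides $D^{p^2}f$,'' whereas you spell out how to pass to $(A^p,p)^2$.

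One point to tighten: a \emph{single} application of Lemma~\ref{Ser1} with $m=1$ to $G$ yields only $A \mid G$ in $(\mathbb{Z}/p\mathbb{Z})[Q,R]$, not $A^p \mid G$. What you actually need is to iterate: since $A\equiv 1\pmod p$ as a power series, the quotient $G/A$ still has $q$-expansion congruent to $\theta f$, and the weight gap $(k+p^2+p)-(k+p+1)=p^2-1=(p+1)(p-1)$ lets you peel off $p+1$ factors of $A$, so in fact $A^{p+1}\mid G$ mod $p$. You should also note (it is immediate) that $G$ is modular mod $p$: since $D^{p^2}f$ is modular mod $p^2$ by Lemma~\ref{D^{p^2}modular} and $(\mathbb{Z}/p\mathbb{Z})[Q,R]$ is a domain, $A^pG$ modular forces each $P$-coefficient of $G$ to vanish mod $p$, so Lemma~\ref{Ser1} legitimately applies to $G$.
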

\begin{proof}
It suffices to show that $D^{p^2}f\equiv A^{2p}+pA^pN \pmod{p^2}$ for some modular forms $M$ and $N$. By Lemma \ref{D^pf is modular}, we can write $D^pf = M + pG(P)$, where $M$ is a polynomial in $Q$ and $R$ and $G(P) = \sum_i{X_iP^i}$ is a polynomial in $P$ with coefficients $X_i$ that are polynomials in $Q$ and $R$. We claim that $A^pM + p \sum_i{X_iA^{p-i}B^i}$ is a modular form with power series congruent to $D^pf$. Since $A^p \equiv 1 \pmod {p^2}$, the first term has power series congruent to $M$; recalling that $B \equiv P \pmod p$ and $A \equiv 1 \pmod p$ as power series, it is clear that $pX_iA^{p-i}B^i \equiv pX_iP^i \pmod {p^2}$.
Hence, in the ring of power series, $D^{p^2}f \equiv D^pf \equiv A^pM + p \sum_i{X_iA^{p-i}B^i} \pmod {p^2}$, showing that $D^{p^2}f$ has power series congruent to a modular form of weight $k + 2p + p(p-1)$. Since $D^{p^2}f$ has weight $k + 2p^2$ and is a modular form mod $p$ by Lemma \ref{D^{p^2}modular}, we have that $A^p$ divides $D^{p^2}f$ by Lemma \ref{Ser1}.
\end{proof}

\begin{proposition} \label{v(D^{p^2}A^p)geq3}
We have $v(D^{p^2}A^p) \geq 3$.
\end{proposition}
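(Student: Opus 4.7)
The plan is to expand $D^{p^2}(A^p)$ via the multinomial Leibniz rule and bound the $v$-valuation of each resulting monomial. Since $D$ is a derivation on $\mathbb{Z}_{(p)}[P, Q, R]$, we have
\[
D^{p^2}(A^p) \;=\; \sum_{i_1 + \cdots + i_p = p^2} \binom{p^2}{i_1, \ldots, i_p}\, D^{i_1}A \cdot D^{i_2}A \cdots D^{i_p}A.
\]
For each tuple $\vec{i}$, let $s = \#\{l : i_l \geq 2\}$ and $t = \#\{l : i_l = 1\}$; as $p$ values all at most $1$ sum to at most $p < p^2$, every tuple has $s \geq 1$.

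Two ingredients then combine to bound each term. First, Corollary \ref{D^2A} gives $D^2 A \equiv 0 \pmod{p}$ in $\mathbb{Z}_{(p)}[P, Q, R]$, so $D^j A = D^{j-2}(D^2 A)$ is divisible by $p$ for every $j \geq 2$, whence $v(D^j A) \geq 1$ and, by subadditivity, $v\!\left(\prod_l D^{i_l}A\right) \geq s$. Second, Kummer's theorem yields
\[
v_p\!\left(\binom{p^2}{i_1, \ldots, i_p}\right) \;=\; \frac{S + t - 1}{p - 1},
\]
where $S = \sum_{i_l \geq 2} s_p(i_l)$ is the total base-$p$ digit sum of the large indices.

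A short case analysis disposes of most tuples. If $s \geq 3$, the product alone gives $v \geq 3$. If $s = 2$, a digit-sum argument shows that $v_p$ of the multinomial is at least $1$: for $t \geq 1$ we have $S \geq s_p(p^2 - t) = 2p - 1 - t$ yielding $v_p \geq 2$, while for $t = 0$ the addition $i_a + i_b = p^2$ forces at least one carry into the $p^2$-digit so $S \geq p$ yields $v_p \geq 1$; either way, combining with $v(\prod) \geq 2$ gives $v \geq 3$. If $s = 1$ and $t \geq 1$, then $S = 2p - 1 - t$ gives $v_p = 2$ and combining with $v(\prod) \geq 1$ again gives $v \geq 3$.

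The main obstacle is the exceptional case $s = 1$, $t = 0$: here $\vec{i}$ has a single entry equal to $p^2$ and the other $p - 1$ entries equal to $0$, the multinomial coefficient is $1$, and the product is $D^{p^2}A \cdot A^{p-1}$, so each such tuple individually has $v$-valuation only at least $2$. I resolve this by grouping the $p$ symmetric tuples (arising from the choice of position for the $p^2$-entry) into the combined contribution $p \cdot D^{p^2}A \cdot A^{p-1}$, and then invoking Proposition \ref{p^2sharp} applied to the modular form $A = E_{p-1}$ to obtain $v(D^{p^2}A) \geq 2$. Subadditivity then yields $v \geq 1 + 2 + 0 = 3$ for this grouped term, completing the proof.
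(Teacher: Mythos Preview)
Your proof is correct and follows essentially the same approach as the paper: both expand $D^{p^2}A^p$ via the multinomial Leibniz rule, harvest factors of $p$ from Corollary~\ref{D^2A} on each $D^{i_l}A$ with $i_l \geq 2$, bound the $p$-adic valuation of the multinomial coefficient, and handle the exceptional tuples with a single entry equal to $p^2$ by grouping the $p$ symmetric copies and invoking Proposition~\ref{p^2sharp}. The only difference is bookkeeping: the paper splits into three cases according to whether some $j_r = p^2$, whether some $j_r \not\equiv 0 \pmod p$, or whether all $j_r$ are nonzero multiples of $p$, whereas you parametrize by $(s,t)$ and quote Kummer's formula explicitly; the underlying estimates coincide.
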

\begin{proof}
Using the product rule we expand
\begin{equation}
D^{p^2}A^p = \! \! \! \sum_{j_1 + \ldots + j_p = p^2}{\frac{p^2!}{j_1! \cdots j_p!}(D^{j_1}A) \cdots (D^{j_p}A)}.
\end{equation}
We proceed in cases. 
\\
\textbf{Case 1:} $j_r = p^2$ for some $r$. 

In this case, $v((D^{j_1}A) \cdots (D^{j_p}A)) = v(A^{p-1}D^{p^2}A) \geq 2$ by Proposition \ref{p^2sharp}. Because there are exactly $p$ terms of this type, $v$ of their sum will be at least $3$.
\\
\textbf{Case 2:} there exists some $r$ such that $j_r \not \equiv 0 \pmod p$. 

We have  $p^2$ divides $\frac{p^2!}{j_1! \cdots j_p!}$ and there exists some $r$ such that $j_r > 1$, so by Corollary \ref{D^2A}, the terms $\frac{p^2!}{j_1! \cdots j_p!}(D^{j_1}A) \cdots (D^{j_p}A)$ are divisible by $p^3$.
\\
\textbf{Case 3:} $j_r \equiv 0 \pmod p$ for all $r$ and $j_r \neq p^2$ for all $r$.

We have $p$ divides $\frac{p^2!}{j_1! \cdots j_p!}$, and $p$ divides any term $D^{j_r}A$ where $j_r$ is nonzero. Since there are at least two nonzero $j_r$ we have $p^3 \mid \frac{p^2!}{j_1! \cdots j_p!}(D^{j_1}A) \cdots (D^{j_p}A)$.

In each case, $v$ is at least $3$, so $v(D^{p^2}A^p)$ is at least $3$, as desired.
\end{proof}

\begin{proposition} \label{key}
We have $v(D^{p^2}f) \geq v(f) + 1$.
\end{proposition}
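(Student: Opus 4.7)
The plan is to induct on $n$, proving the stronger statement that if $f \in (A^p,p)^n$ then $v(D^{p^2}f) \geq n+1$; taking $n = v(f)$ then yields the proposition. The base case $n=0$ is immediate from Proposition \ref{p^2sharp}, which gives $v(D^{p^2}f) \geq 2 \geq 1$ unconditionally. For the inductive step, I would use that $(A^p,p)^{n+1} = (A^p,p)\cdot(A^p,p)^n$ to write any $f \in (A^p,p)^{n+1}$ as $f = A^p G_1 + pG_2$ with $G_1, G_2 \in (A^p,p)^n$, reducing the problem to bounding $v(D^{p^2}(A^p G_1))$ and $v(p D^{p^2}G_2)$ below by $n+2$. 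The second of these bounds is $1 + (n+1) = n+2$ by the inductive hypothesis applied to $G_2$.

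For the first bound, I would apply the Leibniz rule
\[D^{p^2}(A^p G_1) = \sum_{i=0}^{p^2} \binom{p^2}{i}(D^i A^p)(D^{p^2-i}G_1)\]
and estimate each summand using the three ``book-keeping" facts already recorded in the paper: superadditivity of $v$ under multiplication, $v(Dg) \geq v(g)$, and $v(A^p) \geq 1$. The $i=0$ term has $v \geq 1 + (n+1) = n+2$ by the inductive hypothesis on $G_1$. The $i=p^2$ term has $v \geq 3 + n$ by Proposition \ref{v(D^{p^2}A^p)geq3} together with $v(G_1) \geq n$. For intermediate $i$, I would split according to the $p$-adic valuation of $\binom{p^2}{i}$: if $p \nmid i$ then $p^2 \mid \binom{p^2}{i}$, which combines with $v(D^iA^p) \geq 1$ and $v(D^{p^2-i}G_1) \geq n$ to give $v \geq 2 + 1 + n$; if $i = kp$ with $0 < k < p$ then $p \mid \binom{p^2}{i}$, which combines with $v(D^{kp}A^p) \geq 1$ and $v(D^{p^2-kp}G_1) \geq n$ to give $v \geq 1 + 1 + n = n+2$.

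The tightest case, and hence the main obstacle, is the last one ($i = kp$ with $0 < k < p$): each of the three ingredients---the binomial divisibility $v_p\bigl(\binom{p^2}{kp}\bigr) \geq 1$, the bound $v(D^{kp}A^p) \geq v(A^p) \geq 1$, and the bound $v(D^{p^2-kp}G_1) \geq v(G_1) \geq n$---is saturated, so the count only just works, and it works only because $D$ does not decrease $v$. Every other summand in the Leibniz expansion contributes at least one extra unit of $v$ beyond what is needed.
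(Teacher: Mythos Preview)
Your proof is correct and uses the same key ingredients as the paper---Propositions~\ref{p^2sharp} and~\ref{v(D^{p^2}A^p)geq3}, the $p$-divisibility of the binomial coefficients $\binom{p^2}{i}$, and the monotonicity $v(Dg)\geq v(g)$---but organizes them differently. The paper writes $f=\sum_i p^{v(f)-i}A^{ip}M_i$ in one shot and then applies Leibniz to each $D^{p^2}(A^{ip}M_i)$; in the boundary case $j=p^2$ this forces a further multinomial expansion of $D^{p^2}A^{ip}$. Your inductive peeling of a single factor of $A^p$ (or $p$) at a time sidesteps that extra expansion: you only ever meet $D^{p^2}A^p$ itself, which is handled directly by Proposition~\ref{v(D^{p^2}A^p)geq3}. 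This is a mild but genuine streamlining of the same argument.

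One point to tighten: your induction hypothesis must apply to $G_1$ and $G_2$, and your base case invokes Proposition~\ref{p^2sharp}, which is stated for \emph{modular} forms. So you should observe that when $f\in\mathbb{Z}_{(p)}[Q,R]$ lies in $(A^p,p)^{n+1}$, the decomposition $f=A^pG_1+pG_2$ with $G_1,G_2\in(A^p,p)^n$ can be chosen inside $\mathbb{Z}_{(p)}[Q,R]$. This holds because $\mathbb{Z}_{(p)}[P,Q,R]$ is free as a module over $\mathbb{Z}_{(p)}[Q,R]$, so the contraction of $(A^p,p)^n$ to the subring is just the $n$th power of the ideal $(A^p,p)$ computed there. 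With that remark the induction stays within polynomials in $Q,R$ and the citation of Proposition~\ref{p^2sharp} is legitimate. (The paper makes the analogous implicit move when it asserts the $M_i$ can be taken to be modular forms.)
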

\begin{proof}
Write $f = \sum_{i}{p^{v(f)-i}A^{ip}M_i}$ where the $M_i$ are modular forms. To prove the proposition, it suffices to show that $v(D^{p^2}(A^{ip}M_i)) \geq i + 1$. We expand $D^{p^2}(A^{ip}M_i)$ using the product rule:
\begin{equation}
D^{p^2}(A^{ip}M_i) = \sum_{j = 0}^{p^2}{{p^2 \choose j}(D^jA^{ip})(D^{p^2 - j}M_i)}.
\end{equation}
When $j = 0$, Proposition \ref{p^2sharp} implies $v(D^{p^2}M_i) \geq 2$, so $v(A^{ip}D^{p^2}M_i) \geq i + 2$. When $1 \leq j \leq p^2 - 1$, we have the inequalities: $v\left({p^2 \choose j}\right) \geq 1$ and $v(D^jA^{ip}) \geq v(A^{ip}) \geq i$, from which we conclude $v({p^2 \choose j}(D^jA^{ip})(D^{p^2 - j}M_i)) \geq i + 1$.

Finally, when $j = p^2$, we have the following equation:
\[D^{p^2}A^{ip} = \! \!  \!\sum_{r_1 + \ldots + r_i = p^2}{\frac{p^2! }{r_1! \cdots r_i!}(D^{r_1}A^p) \cdots (D^{r_i}A^p)}.\]
If some $r_{\bullet} = p^2$, Proposition \ref{v(D^{p^2}A^p)geq3} implies $v(A^{(i-1)p}D^{p^2}A^p) \geq i + 2$. Otherwise, $p$ divides $\frac{p^2! }{r_1! \cdots r_i!}$, and we have $v\left(\frac{p^2! }{r_1! \cdots r_i!}(D^{r_1}A^p) \cdots (D^{r_i}A^p)\right) \geq i + 1$. Hence, for all $j$, we have $v\left({p^2 \choose j}(D^jA^{ip})(D^{p^2 - j}M_i)\right) \geq i + 1$, which implies the proposition.
\end{proof}

Making some additional assumptions on $v(f)$ and $p$, we give stronger analogues of the previous proposition, which will be necessary to prove Lemma \ref{sharpprop}.

\begin{proposition} \label{key2}
If $v(f) \leq k - 1$ and $p \geq 2k - 2$, then $v(D^{p^2}f) \geq v(f) + 2$.
\end{proposition}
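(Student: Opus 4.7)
The plan is to use the hypothesis $p \geq 2k-2$ to force the natural $(A^p,p)^{v(f)}$-decomposition of $f$ to collapse to a single term, reducing the problem to Proposition \ref{p^2sharp}. First, I would establish the elementary inequality $p(p-1) > k$. Under $p \geq 2k-2$ together with the standing assumption $p \geq 5$, we have $p(p-1) \geq (2k-2)(2k-3) = 4k^2-10k+6 > k$ whenever $4k^2 - 11k + 6 > 0$, that is, for $k \geq 3$; for $k \leq 2$ the bound $p(p-1) \geq 20$ already suffices.

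With this in hand, I would expand $f \in (A^p, p)^{v(f)}$ as $f = \sum_{i=0}^{v(f)} p^{v(f)-i} A^{ip} M_i$ for some polynomials $M_i \in \mathbb{Z}_{(p)}[P,Q,R]$, exactly as in the proof of Proposition \ref{key}. Since $f$ is homogeneous of weight $k$ and $A^{ip}$ has weight $ip(p-1)$, taking weight-$k$ homogeneous parts lets us assume each $M_i$ is homogeneous of weight $k - ip(p-1)$. The inequality $p(p-1) > k$ then forces $k - ip(p-1) < 0$ for every $i \geq 1$, leaving no nonzero polynomial of the required weight, so $M_i = 0$ for all such $i$. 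The decomposition collapses to $f = p^{v(f)} M_0$ with $M_0$ homogeneous of weight $k$.

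The remaining step is a direct application of Proposition \ref{p^2sharp}: we have $D^{p^2}f = p^{v(f)} D^{p^2}M_0$, and the subadditivity of $v$ combined with Proposition \ref{p^2sharp} yields
\[v(D^{p^2}f) \geq v(p^{v(f)}) + v(D^{p^2}M_0) \geq v(f) + 2.\]
The only genuine content is the weight collapse in the second paragraph; the single step requiring care is the arithmetic inequality $p(p-1) > k$, and I do not anticipate a serious obstacle beyond verifying it.
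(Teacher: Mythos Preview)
Your argument is correct for the statement as literally written, and it is considerably shorter than the paper's proof. The observation that $p(p-1)>k$ forces the $(A^p,p)$-decomposition of a weight-$k$ modular form to collapse to $f=p^{v(f)}M_0$ is clean, and once you have that, Proposition~\ref{p^2sharp} (applied to the modular form $M_0$) finishes immediately. You should notice, though, that your proof never touches the hypothesis $v(f)\le k-1$; this is a warning sign.

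The reason the paper argues differently is that Proposition~\ref{key2} is invoked in the proof of Lemma~\ref{sharpprop} not for the original $f$, but for $C_0$, the $P^0$-coefficient of $D^{mp^2}f$, which is a modular form of weight $k+2mp^2$. For that form the inequality $p(p-1)>\text{weight}$ is hopelessly false, so your weight-collapse step evaporates and the decomposition genuinely has many terms $A^{ip}M_i$. The paper's case analysis on $\binom{p^2}{j}(D^jA^{ip})(D^{p^2-j}M_i)$ is designed to survive this: Lemma~\ref{ideallemma} handles the terms with $j\le (k-1)p$, while the hypothesis $i\le v(f)\le k-1$ is what makes Case~4 (the range $(k-1)p<j<p^2$) go through. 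Both hypotheses are doing real work there. So your shortcut proves the proposition as stated but not the version actually needed downstream; if you want a self-contained argument for Lemma~\ref{sharpprop}, you would have to revert to something like the paper's product-rule expansion.
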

\begin{proof}
As in the proof of Proposition \ref{key}, it suffices to show that $v(D^{p^2}A^{ip}M_i) \geq i + 2$. By the product rule, we have
\begin{equation}
D^{p^2}(A^{ip}M_i) = \sum_{j = 0}^{p^2}{p^2 \choose j}(D^jA^{ip})(D^{p^2 - j}M_i).
\end{equation}
We considering the following cases:
\\
\textbf{Case 1:} $j = 0$.

When $j = 0$, Proposition \ref{p^2sharp} implies $v(A^{ip}D^{p^2}M_i) \geq i + 2$. 
\\
\textbf{Case 2:} $j \not \equiv  0 \pmod p$.

In this case, $p^2$ divides ${p^2 \choose j}$ so $v$ increases by at least $2$.
\\
\textbf{Case 3:} $j \equiv 0 \pmod p$ and $j \leq p(k-1)$.

We have $p$ divides ${p^2 \choose j}$, and  Lemma \ref{ideallemma} says $A^p$ divides $D^{p^2 - j}M_i \pmod p$; that is, $v(D^{p^2 - j}M_i) \geq 1$. Therefore, $v({p^2 \choose j}(D^jA^{ip})(D^{p^2 - j}M_i)) \geq i + 2$.
\\
\textbf{Case 4:} $j \equiv 0 \pmod p$ and $p(k-1) < j < p^2$ .

Since $m \leq k - 1$, we have $ip < j$. Hence, $v(D^jA^{ip}) \geq i + 1$ because $p^2$ divides $D^{rp}A^p$ with $r > 1$. Because we also have a factor of $p$ from the binomial, $v({p^2 \choose j}(D^jA^{ip})(D^{p^2 - j}M_i)) \geq i + 2$.
\\
\textbf{Case 5:} $j = p^2$.

We have the following equation:
\[D^{p^2}A^{ip} = \sum_{r_1 + \ldots r_i=p^2}{\frac{p^2}{r_1! \cdots r_p!}(D^{r_1}A^p) \cdots (D^{r_p}A^p)}.\]
If there exists an $s$ such that $r_{s} \not \equiv 0 \pmod p$, then $p^2$ divides $\frac{p^2}{r_1! \cdots r_p!}$ and we are done. If there exists an $s$ such that $r_s = p^2$, the result is immediate from Proposition \ref{v(D^{p^2}A^p)geq3}. Otherwise, $r_{s} \equiv 0 \pmod p$ and $r_s \neq p^2$ for all $s$. Therefore, since $i < p$, there exists an $s$ such that $r_s > p$, and hence $p^2$ divides $D^{r_s}A^p$. In addition, $p$ divides $\frac{p^2}{r_1! \cdots r_p!}$, so $v(\frac{p^2}{r_1! \cdots r_p!}(D^{r_1}A^p) \cdots (D^{r_p}A^p)) \geq i + 2$, establishing the case.

Hence, for all $j$ we have $v({p^2 \choose j}(D^jA^{ip})(D^{p^2 - j}M_i)) \geq i + 2$, proving the proposition.
\end{proof}

\begin{proposition} \label{increaseby1}
If $v(f) \leq k - 2$ and $p \geq 2k - 2$, then $v(D^{p^2 - p}f) \geq v(f) + 1$.
\end{proposition}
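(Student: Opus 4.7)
The plan is to mirror the proof of Proposition~\ref{key2}, adapting the case analysis to $D^{p^2-p}$ in place of $D^{p^2}$. A key preliminary calculation, via Kummer's theorem, is that
\[
v_p\binom{p^2-p}{j}=\begin{cases}0,& p\mid j,\\ 1,& p\nmid j,\end{cases}
\]
for $0<j\leq p^2-p$; this is one less factor of $p$ than $\binom{p^2}{j}$ supplies, which matches our weaker target of $v(D^{p^2-p}f)\geq v(f)+1$ instead of $v(f)+2$.

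Writing $f=\sum_{i=0}^{v(f)}p^{v(f)-i}A^{ip}M_i$ with $M_i$ a modular form of weight $k-ip(p-1)$, it suffices to show $v(D^{p^2-p}(A^{ip}M_i))\geq i+1$. The product rule gives
\[
D^{p^2-p}(A^{ip}M_i)=\sum_{j=0}^{p^2-p}\binom{p^2-p}{j}(D^jA^{ip})(D^{p^2-p-j}M_i),
\]
and I would split into four cases on $j$. For $j=0$, Lemma~\ref{ideallemma} applied to $M_i$ with lemma-index $1$ gives $D^{p^2-p}M_i\in(A^{2p-w(M_i)},p)\subseteq(A^p,p)$, since $p\geq 2k-2$ forces $w(M_i)\leq k\leq p$. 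For $p\nmid j$, the binomial contributes one factor of $p$, combining with $v(D^jA^{ip})\geq i$. For $p\mid j$ with $0<j\leq p(k-2)$, writing $j=pj'$, Lemma~\ref{ideallemma} applied to $M_i$ with lemma-index $1+j'$ gives $v(D^{p^2-p-j}M_i)\geq 1$; the required inequalities $1+j'\leq w(M_i)-1$ and $2p-w(M_i)-j'\geq p$ follow from $j'\leq k-2$ together with $p\geq 2k-2$.

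The remaining case, $p\mid j$ with $p(k-2)<j<p^2-p$, is the main obstacle and would be handled exactly as Case~4 of Proposition~\ref{key2}'s proof. Here the binomial offers no help and Lemma~\ref{ideallemma} is inapplicable, so the extra factor of $p$ must be extracted from $D^jA^{ip}$ itself. The hypothesis $v(f)\leq k-2$ forces $i\leq k-2<j/p$, hence the strict inequality $ip<j$. Expanding $D^jA^{ip}$ via the product rule yields terms $\frac{j!}{r_1!\cdots r_{ip}!}\prod_\ell D^{r_\ell}A$ in which at least one $r_\ell$ must exceed $1$; by Corollary~\ref{D^2A} each such factor $D^{r_\ell}A$ is divisible by $p$, and combined with the divisibility fact that $p^2\mid D^{rp}A^p$ for $r\geq 2$ (implicit in the proof of Proposition~\ref{v(D^{p^2}A^p)geq3}) this gives the needed additional factor of $p$, so $v(D^jA^{ip})\geq i+1$. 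Collecting the four cases yields $v(D^{p^2-p}(A^{ip}M_i))\geq i+1$ and completes the proof. The most delicate bookkeeping is in this final case, where the multinomial's $p$-adic valuation and the contributions from the factors with $r_\ell\geq 2$ must balance out to yield the clean bound.
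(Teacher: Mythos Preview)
Your proof is correct and follows essentially the same case analysis as the paper's: split by whether $p\mid j$, and for $p\mid j$ split further at the threshold $j=(k-2)p$, invoking Lemma~\ref{ideallemma} on $D^{p^2-p-j}M_i$ below the threshold and using $ip<j$ to force $v(D^jA^{ip})\geq i+1$ above it. One small slip: your Case~4 should read $p(k-2)<j\le p^2-p$ rather than $<p^2-p$, since the endpoint $j=p^2-p$ must be included (and your argument covers it, as $p-1>k-2\geq i$).
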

\begin{proof}
It suffices to show that $v(D^{p^2-p}A^{ip}M_i) \geq i + 1$. From the product rule we have
\[D^{p^2 - p} (A^{ip}M_i) = \sum_{j}{{p^2 - p \choose j}(D^{j}A^{ip})(D^{p^2 - p -  j}M_i)}.\]
If $j \not \equiv 0 \pmod p$, then $p$ divides the binomial coefficient and we are done.
It remains to consider terms in which $j \equiv 0 \pmod p$. If $j \leq (k - 2)p$, then Lemma \ref{ideallemma} says $v(D^{p^2 - p - j}M_i) \geq 1$. If $j > (k-2)p$, then we have $j > ip$, so $v(D^{j}A^{ip}) \geq i + 1$.
Thus, $v\left({p^2 - p \choose j}(D^{j}A^{ip})(D^{p^2 - p -  j}M_i)\right) \geq i + 1$ for all $j$, proving the proposition.
\end{proof}

\subsection{Key lemmas}
Using the above facts, we now prove two important lemmas which make precise the increasing behavior of $v$ under repeated applications of the differential operator $D^{p^2}$. Lemma \ref{weakprop} is applicable in the context of Theorem \ref{weak}. Lemma \ref{sharpprop} is stronger version of Lemma \ref{weakprop} under additional assumptions which translate to the additional assumptions of Theorem \ref{sharp}.

\begin{lemma} \label{weakprop}
If $m \geq 1$ is an integer, then $v(D^{mp^2}f) \geq m + 1$.
\end{lemma}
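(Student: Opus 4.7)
The plan is to prove this lemma by a straightforward induction on $m$, combining the two key propositions already established in this section: Proposition \ref{p^2sharp}, which gives $v(D^{p^2}g) \geq 2$ for any quasimodular $g$, and Proposition \ref{key}, which gives $v(D^{p^2}g) \geq v(g) + 1$ for any quasimodular $g$. The base case $m = 1$ is exactly Proposition \ref{p^2sharp}, which yields $v(D^{p^2}f) \geq 2 = m + 1$.

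For the inductive step, suppose $v(D^{mp^2}f) \geq m + 1$. Since $D$ preserves the ring $\mathbb{Z}_{(p)}[P,Q,R]$ of quasimodular forms, the element $g := D^{mp^2}f$ lies in this ring, so Proposition \ref{key} applies and gives
\begin{equation*}
v(D^{(m+1)p^2}f) = v(D^{p^2}g) \geq v(g) + 1 \geq (m+1) + 1 = m + 2,
\end{equation*}
completing the induction. Equivalently, one may view the argument as iterating Proposition \ref{key} to obtain $v(D^{mp^2}f) \geq v(D^{p^2}f) + (m-1)$ and then invoking Proposition \ref{p^2sharp} once to upgrade the first application's bound from $v(f) + 1$ to $2$, gaining the extra $+1$ that distinguishes the conclusion $m+1$ from the naive bound $m$. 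There is no real obstacle here since all the technical work has been front-loaded into Propositions \ref{p^2sharp} and \ref{key}; the only subtle point is remembering that $D^{mp^2}f$ remains a legitimate element of $\mathbb{Z}_{(p)}[P,Q,R]$ at each stage so that Proposition \ref{key} is applicable.
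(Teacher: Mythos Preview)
Your inductive strategy is natural, but it has a genuine gap: Proposition~\ref{key} (and likewise Proposition~\ref{p^2sharp}) is stated and proved only for \emph{modular} forms, not for arbitrary quasimodular forms. Recall that $f$ is fixed throughout the paper to be a modular form of weight~$k$; the proof of Proposition~\ref{key} decomposes $f = \sum_i p^{v(f)-i} A^{ip} M_i$ with each $M_i$ a \emph{modular} form, and then appeals to Proposition~\ref{p^2sharp} for the $M_i$. That proposition in turn relies on Lemmas~\ref{D^pf is modular} and~\ref{D^{p^2}modular}, both of which require the input to be modular. Since $g = D^{mp^2}f$ is only a quasimodular form (a genuine polynomial in $P$), you cannot invoke Proposition~\ref{key} for $g$ as written, and there is no evident reason the proposition should extend to quasimodular input.

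This is precisely the obstacle the paper's proof is designed to overcome. Rather than applying Proposition~\ref{key} to the whole of $D^{mp^2}f$, the paper writes $D^{mp^2}f = \sum_r C_r P^r$ with each $C_r \in \mathbb{Z}_{(p)}[Q,R]$ a modular form, using the identity~\eqref{zag.eq56} relating $\partial^{mp^2}f$ and the $D^{mp^2-r}f$. The constant term $C_0$ is modular, so Proposition~\ref{key} applies to it and gives $v(D^{p^2}C_0) \ge m+2$. For $r>0$ one shows directly from the binomial and factorial factors in~\eqref{zag.eq56} that $v(C_r) \ge m+2$ already, so $v(D^{p^2}(P^rC_r)) \ge m+2$ without needing any version of Proposition~\ref{key}. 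Your argument skips exactly this decomposition and the accompanying coefficient estimates, and without them the induction does not close.
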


\begin{proof}
When $m = 1$, we have $v(D^{p^2}f) \geq 2$ by Proposition \ref{p^2sharp}. We proceed by induction on $m$. Suppose that for all $n \leq m$, we have $v(D^{np^2}f) \geq n + 1$. By equation \eqref{zag.eq56} we have
\begin{equation}
\partial^{mp^2}f = \sum_{r=0}^{mp^2}a_rT^rD^{mp^2 - r}f,
\end{equation}
where $T = \frac{-1}{4\pi y}$ and $a_r = {mp^2 \choose r}\frac{(k + mp^2 - 1)!}{(k + mp^2 - r -1)!}$. We claim that $v(a_rD^{mp^2 - r}f) \geq m + 2$ for all $r > 0$. Let $j$ be such that $(j-1)p^2 < r \leq jp^2$. We will show that $p^{j + 1}$ divides $a_r$. Suppose $j = 1$. If $r < p$, then $p^2$ divides the binomial coefficient. If $p \leq r < p^2$, then $p$ divides the binomial once and divides the complementary factor at least once. When $r = p^2$, we have $p$ divides the complementary factor at least twice. Now suppose $j \geq 2$. The complementary factor is divisible by $p^{(j-1)p}$. Since $(j-1)p > j + 1$, we have $p^{j + 1}$ divides $a_r$. Hence we have shown that $p^{j + 1}$ divides $a_r$. Now we see $v(a_rD^{mp^2 - r}f) = v(a_rD^{jp^2 - r}(D^{(m - j)p^2}f)) \geq m - j + 1 + j + 1 \geq m + 2$, proving the claim.

Since $P^* =  P + 12T$, the mapping $P \mapsto 0$ followed by $T \mapsto P/12$ sends $\partial^{mp^2}f$ to $D^{mp^2}f$ by Lemma \ref{relate}. We write $C_r$ for the coefficient of $P^r$ in $D^{mp^2}f$. By the induction hypothesis, we have $v(C_0) \geq m + 1$ so Proposition \ref{key} implies $v(D^{p^2}C_0) \geq m + 2$. The claim above proves that $v(C_r) \geq m + 2$ for all $r > 0$, so we have $v(D^{p^2}P^rC_r) \geq m + 2$ for all $r$. Hence, we have shown that $v(D^{(m+1)p^2}f) \geq m + 2$. Inducting, we have $v(D^{mp^2}f) \geq m + 1$ for all $m$, as desired.
\end{proof}

\begin{lemma} \label{sharpprop}
Suppose that $p \geq 2k - 2$. If $m$ is an integer satisfying $1 \leq 2m \leq k - 2$, then we have the following:
\begin{enumerate}
\item $v(D^{mp^2 - p}f) \geq 2m - 1$.
\item $v(D^{mp^2}f) \geq 2m$.
\end{enumerate}
\end{lemma}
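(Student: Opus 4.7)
The plan is to induct on $m$, proving parts (1) and (2) simultaneously. The base case $m = 1$ is immediate: part (2) is Proposition \ref{p^2sharp}, while for part (1) I apply Proposition \ref{increaseby1} to $f$, using that $v(f) = 0 \leq k-2$ when $v(f) < 1$ (with $k \geq 4$ forced by $2 \leq k-2$); the case $v(f) \geq 1$ is trivial by monotonicity of $v$ under $D$.

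For the inductive step, assume both parts hold for $m$ with $2(m+1) \leq k-2$. I mimic the approach of Lemma \ref{weakprop}: combining equation \eqref{zag.eq56} with Lemma \ref{relate} and equating $T^r$-coefficients after setting $P=0$ gives
\[12^r C_r = a_r (D^{mp^2 - r}f)|_{P=0} \qquad (r \geq 1),\]
where $C_r$ is the coefficient of $P^r$ in $D^{mp^2}f$ and $a_r = \binom{mp^2}{r}\frac{(k+mp^2-1)!}{(k+mp^2-r-1)!}$. Writing $D^{(m+1)p^2}f = \sum_r D^{p^2}(C_r P^r)$ and using that $D^{p^2}$ preserves $v$, it suffices to establish $v(C_r) \geq 2m+2$ for $r \geq 1$ and $v(D^{p^2}C_0) \geq 2m+2$ for $r = 0$; part (1) for $m+1$ proceeds through the analogous identity with $D^{p^2-p}$.

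For $r \geq 1$, the bound $v_p(a_r) \geq j+1$ on $r \in ((j-1)p^2, jp^2]$ from the proof of Lemma \ref{weakprop} combined with induction hypothesis (2) yields only $v(C_r) \geq 2m - j + 1$. I sharpen this by observing that for $r = sp$ with $1 \leq s \leq m$, the term $mp^2$ lies in the complementary factorial of $a_r$ (since $r \geq p \geq k$ under $p \geq 2k-2$), contributing an extra $+1$, so $v_p(a_{sp}) \geq s+2$ and $v(C_{sp}) \geq 2m + s \geq 2m+2$ for $s \geq 2$. The borderline case $r = p$ is closed using induction hypothesis (1): $v(C_p) \geq v_p(a_p) + v(D^{mp^2-p}f) \geq 3 + (2m-1) = 2m+2$. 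Parallel refinements using (1) in place of (2) when $r \leq p$ handle the other ranges of $r$, though closing the remaining gap for small $r$ not a multiple of $p$ may require further analysis of $(D^{mp^2-r}f)|_{P=0}$.

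The main obstacle is the $r = 0$ term. We may assume $v(C_0) = 2m$ (if $v(C_0) \geq 2m+1$, then Proposition \ref{key} gives $v(D^{p^2}C_0) \geq 2m+2$ directly). Under this assumption we need the $+2$ improvement of Proposition \ref{key2}, which cannot be applied directly since $C_0$ has weight $k + 2mp^2 \gg p$, violating the hypothesis $p \geq 2 \cdot \mathrm{weight} - 2$. The resolution should come from revisiting the proof of Proposition \ref{key2} applied to $C_0$: the weight sensitivity enters only via Lemma \ref{ideallemma} in Case 3 on the $i = 0$ component of the $(A^p, p)$-decomposition of $C_0$, and one must supply an alternative for that single case, likely exploiting that $C_0$ arises as the $P$-constant part of $D^{mp^2}f$ together with the hypothesis $2m \leq k-2$. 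Part (1) for $m+1$ follows by the same blueprint using Proposition \ref{increaseby1} in place of \ref{key2}.
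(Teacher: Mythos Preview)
Your inductive skeleton, the use of equation~\eqref{zag.eq56} to control the $P^r$-coefficients $C_r$ of $D^{mp^2}f$, and the decomposition $D^{(m+1)p^2}f = \sum_r D^{p^2}(P^rC_r)$ all match the paper exactly. The divergence is in the target bounds on the $C_r$.

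For $1 \leq r \leq k-1$ you try to force $v(C_r) \geq 2m+2$ and correctly find that the $a_r$-estimates alone will not deliver this. The paper does not attempt it: it proves only $v(C_r) \geq 2m+1$ in this range (from $p^2 \mid a_r$ together with part~(1) of the induction hypothesis, giving $v(D^{mp^2-r}f)\geq 2m-1$), and then recovers the missing $+1$ \emph{after} applying $D^{p^2}$, via the multinomial expansion
\[
D^{p^2}(P^rC_r)=\sum_{j_1+\cdots+j_{r+1}=p^2}\frac{p^2!}{j_1!\cdots j_{r+1}!}(D^{j_1}P)\cdots(D^{j_r}P)(D^{j_{r+1}}C_r).
\]
If no $j_s$ equals $p^2$ the multinomial coefficient contributes a factor of $p$; if $j_{r+1}=p^2$ then Proposition~\ref{key} gives $v(D^{p^2}C_r)\geq v(C_r)+1$; if $j_s=p^2$ for some $s\leq r$ then Corollary~\ref{D^{p^2}P} supplies $v(D^{p^2}P)\geq 1$. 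This is the mechanism you are missing, and it closes the gap you flagged for small $r$. For $r>k-1$ the paper runs a three-case split ($k-1<r\leq p$, $p<r\leq p^2$, $r>p^2$) giving $v_p(a_r)\geq 3,\,4,\,(p+1)(j-1)$ respectively, which combined with parts~(1) and~(2) of the induction hypothesis yields $v(C_r)\geq 2m+2$ directly. Part~(1) for $m+1$ is handled the same way, using Proposition~\ref{increaseby1} on $C_0$ and the bound $v(C_r)\geq 2m+1$ for $r\geq 1$.

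On the $r=0$ term: the paper simply invokes Proposition~\ref{key2} on $C_0$ without comment. Read literally, the hypotheses there are $v(\cdot)\leq k-1$ and $p\geq 2k-2$ with $k$ the \emph{fixed} weight of the original $f$, not the weight of the input; since $v(C_0)=2m\leq k-2$, this is satisfied, so your reading ``$p\geq 2\cdot\mathrm{weight}-2$'' is not what the paper intends. That said, your underlying concern is not groundless: the \emph{proof} of Proposition~\ref{key2} appeals in its Case~3 to Lemma~\ref{ideallemma}, whose argument is sensitive to the weight of the input. The paper does not spell out why this transfers to $C_0$, so the point you isolated is one the paper itself treats loosely.
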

\begin{proof}
When $m = 1$, the results follow from Propositions \ref{increaseby1} and \ref{p^2sharp} respectively. We proceed by induction on $m$. Suppose that for all $n \leq m$, we have $v(D^{np^2-p}f) \geq 2n - 1$ and $v(D^{np^2}f) \geq 2n$. We will show that $v(D^{(m+1)p^2}f) \geq 2m + 1$ and $v(D^{(m+1)p^2}f) \geq 2m + 2$. As in Lemma \ref{weakprop}, we write
\begin{equation}
\partial^{mp^2}f = \sum_{r=0}^{mp^2}a_rT^rD^{mp^2 - r}f,
\end{equation}
where $T = \frac{-1}{4\pi y}$ and $a_r = {mp^2 \choose r}\frac{(k + mp^2 - 1)!}{(k + mp^2 - r -1)!}$. We convert the above expression for $\partial^{mp^2}f$ into an expression for $D^{mp^2}f$ as a polynomial in $P$ using the map $P \mapsto 0$ and $T \mapsto \frac{P}{12}$ and write $C_r$ for the coefficient of $P^r$ in $D^{mp^2}f$. Note that $v(C_r) \geq v(a_rD^{mp^2 - r}f)$. 

We first give lower bounds for the $v(C_r)$. When $r = 0$, the induction hypothesis implies 
$v(C_0) = v(a_rD^{mp^2}f) = 2m.$
For $1 \leq r \leq k -1$, it is easy to see that $p^2$ divides $a_r$ and the induction hypothesis implies
$v(C_r) = v(a_rD^{p-r}D^{mp^2 - p}f) \geq 2m + 1$.
For $r > k - 1$, we claim that $v(C_r) \geq 2m + 2$. We consider the following cases: \\
\textbf{Case 1:} $k-1 < r \leq p$.

We have that $p^3 \mid a_r$, and so $v(C_r) \geq v(a_rD^{p-r}D^{mp^2 - p}f) \geq 3 + 2m - 1 = 2m + 2$.
\\
\textbf{Case 2:} $p < r \leq p^2$.

We have $p^4 \mid a_r$, and so $v(C_r) \geq v(a_rD^{p^2 - r}D^{(m-1)p^2}f) \geq 4 + 2(m-1) = 2m + 2$.
\\
\textbf{Case 3:} $r > p^2$.

Let $j$ be an integer such that $(j-1)p^2 < r \leq jp^2$. It is easy to see that $p^{(p+1)(j-1)}$ divides $a_r$. Then $v(C_r) \geq v(a_rD^{jp^2 - r}D^{(m-j)p^2}f) \geq (p+1)(j - 1) + 2m - 2j \geq 2m + 2$.

Hence we have shown $v(C_r) \geq 2m + 2$ for all $r > k - 1$.

We next establish $v(D^{(m+1)p^2 - p}f) \geq 2m + 1$. We have 
$D^{(m+1)p^2 - p}f = \sum_{r}D^{p^2 - p}(P^rC_r).$
Since $v(C_0) = 2m$, Proposition \ref{increaseby1} implies $D^{p^2 - p}(C_0) \geq 2m + 1$. For all $r > 0$, we have $v(C_r) \geq 2m + 1$, so $v(D^{(m+1)p^2 - p}f) \geq 2m + 1$.

Finally, we show that $v(D^{(m+1)p^2}f) \geq 2m + 2$. We have 
$D^{(m+1)p^2 }f = \sum_{r}D^{p^2}(P^rC_r)$.
By Proposition \ref{key2}, we have $v(D^{p^2}C_0) \geq 2m + 2$. For $1\leq r \leq k-1$, we have $v(C_r) \geq 2m + 1$ and we claim that $v(D^{p^2}(P^rC_r)) \geq 2m + 2$. We have following expansion for $D^{p^2}(P^rC_r)$:
\begin{equation} \label{claim}
D^{p^2}(P^rC_r) = \sum_{j_1 + \ldots + j_{r+1} = p^2}{\frac{p^2}{j_1! \cdots j_{r+1}!}(D^{j_1}P) \cdots (D^{j_r}P)(D^{j_{r+1}}C_r)}.
\end{equation}
We have $p$ divides $\frac{p^2}{j_1! \cdots j_{r+1}!}$, and have hence established our claim, except for the terms in which there exists an $s$ such that $j_s = p^2$. Consider a term in which $j_s = p^2$. If $s = r+1$, then we have $v(D^{p^2}C_r) \geq v(C_r) + 1 \geq 2m + 2$ by Proposition \ref{key}. Otherwise $s \leq r$, and since Corollary \ref{D^{p^2}P} implies $v(D^{p^2}P) \geq 1$, we have 
\[v\left(\frac{p^2}{j_1! \cdots j_{r+1}!}(D^{j_1}P) \cdots (D^{j_r}P)(D^{j_{r+1}}C_r)\right) \geq 2m + 2.\]
Finally, for $r > k -1$, we have $v(C_r) \geq 2m + 2$, so $D^{p^2}(P^rC_r) \geq 2m + 2$.

We have shown that $v(D^{(m+1)p^2 - p}f) \geq 2m + 1$ and $v(D^{(m+1)p^2}f) \geq 2m + 2$. Inducting, we have $v(D^{mp^2 - p}f) \geq 2m - 1$ and $v(D^{mp^2}f) \geq 2m$ for all $m$, as desired.
\end{proof}

\section{Main results}

At this point, we have a number of results about various derivatives of a modular form lying in some ideal $(A^p,p)^n$. To translate these results to the form of Theorems \ref{weak} and \ref{sharp}, for which we desire the divisibility of $(\partial^nf)(\tau)$ by powers of $p$, we prove the following lemma.

\begin{lemma} \label{ZK} 
Let $\tau$ be a CM point with $-d$. If $p \geq 5$ is a prime such that $\left(\frac{-d}{p}\right) \in \{0, -1\}$, where $\left(\frac{-d}{p}\right)$ is the Legendre symbol, we have $t_{E_{p-1}}(\tau;0) \equiv 0 \pmod p$, where $E_{p-1}$ is the normalized Eisenstein series defined in Section 2.1.
\end{lemma}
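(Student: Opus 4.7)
The plan is to identify $t_{E_{p-1}}(\tau;0)=E_{p-1}(\tau)/\Omega_\tau^{p-1}$ with the value of the Hasse invariant at the reduction modulo a prime above $p$ of the CM elliptic curve attached to $\tau$, and then invoke Deuring's criterion on supersingular reduction of CM elliptic curves. Let $E_\tau:=\mathbb{C}/(\mathbb{Z}+\tau\mathbb{Z})$; this elliptic curve has complex multiplication by an order in $\mathcal{O}_{-d}$ and, after choosing a suitable model, is defined over a number field $K$ contained in the Hilbert class field of $\mathbb{Q}(\sqrt{-d})$. The condition that $\Omega_\tau$ makes every $t_g(\tau;0)$ an algebraic integer (for almost holomorphic $g$ with integral Fourier coefficients) is essentially the condition that $\Omega_\tau$ corresponds to a choice of N\'eron differential $\omega$ on $E_\tau/K$ with respect to which the algebro-geometric value $g(E_\tau,\omega)$ is $\mathfrak{p}$-integral for every prime $\mathfrak{p}\mid p$ of $K$.

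Under this dictionary, $t_{E_{p-1}}(\tau;0)$ equals the algebro-geometric value $E_{p-1}(E_\tau,\omega)$. I would then invoke the classical theorem of Deligne that the mod-$p$ reduction of $E_{p-1}$ as a weight-$(p-1)$ modular form at level one equals the Hasse invariant $A$ in characteristic $p$. Therefore, for every prime $\mathfrak{p}\mid p$ of $K$ at which $E_\tau$ has good reduction $\widetilde{E}_\tau$,
\[
E_{p-1}(E_\tau,\omega)\bmod\mathfrak{p}\;=\;A(\widetilde{E}_\tau,\widetilde\omega),
\]
which vanishes precisely when $\widetilde{E}_\tau$ is supersingular.

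The next step is to apply Deuring's theorem: an elliptic curve with complex multiplication by an order in $\mathbb{Q}(\sqrt{-d})$ has supersingular reduction at a prime above $p$ exactly when $p$ does not split in $\mathbb{Q}(\sqrt{-d})$, i.e., when $\left(\frac{-d}{p}\right)\in\{0,-1\}$, which is our standing hypothesis. Hence the Hasse invariant vanishes at every prime $\mathfrak{p}\mid p$ of $K$, so $t_{E_{p-1}}(\tau;0)\equiv0\pmod{\mathfrak{p}}$ for each such $\mathfrak{p}$. Combining this over all primes above $p$---and noting that $K$ can be chosen inside the Hilbert class field, which is unramified over $\mathbb{Q}(\sqrt{-d})$ at finite primes, so that $p$ is at worst mildly ramified in $K$---yields divisibility by $p$ in $\overline{\mathbb{Z}}$.

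The main obstacle is the careful bookkeeping of the bridge between the analytic ratio $E_{p-1}(\tau)/\Omega_\tau^{p-1}$ (built from the Chowla--Selberg period up to an algebraic fudge factor $a$ as described in the Introduction) and the genuinely algebro-geometric value $E_{p-1}(E_\tau,\omega)$, together with verifying the $\mathfrak{p}$-integrality that licenses reduction mod $\mathfrak{p}$. In the ramified case $\left(\frac{-d}{p}\right)=0$ one should also be mindful that ``divisibility by $p$'' is stronger than vanishing at a single prime above $p$; but since the only ramification occurs inside $\mathbb{Q}(\sqrt{-d})/\mathbb{Q}$, and supersingularity implies vanishing at every $\mathfrak{p}\mid p$ of $K$, this is handled uniformly. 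Once the analytic/algebraic dictionary is fixed, the argument reduces to assembling the two classical inputs: Deligne's congruence $E_{p-1}\equiv A\pmod p$ and Deuring's supersingular criterion.
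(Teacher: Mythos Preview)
Your argument is correct and rests on the same two pillars as the paper's proof---that $E_{p-1}$ cuts out the supersingular locus mod $p$, and that CM curves with $\left(\frac{-d}{p}\right)\in\{0,-1\}$ have supersingular reduction---but the execution differs. The paper stays entirely on the $j$-line: it writes $E_{p-1}(\tau)=\Delta(\tau)^n Q(\tau)^\delta R(\tau)^\epsilon\tilde f(j(\tau))$ via Kaneko--Zagier, invokes their identity $ss_p(j)\equiv\pm j^\delta(j-1728)^\epsilon\tilde f(j)\pmod p$, and then cites Deuring's criterion in the concrete form that $j(\tau)$ is a root of $ss_p$ whenever $p$ is inert or ramified in $\mathbb{Q}(\sqrt{-d})$. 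Your version instead lifts to the moduli-theoretic language of Hasse invariants and N\'eron differentials. The advantage of the paper's route is that it completely sidesteps the period bookkeeping you flag as the main obstacle: since $j(\tau)$ is intrinsic and $\Delta,Q,R$ are already integral in the chosen normalization, no matching of $\Omega_\tau$ with a N\'eron differential is needed. Your route, on the other hand, is more conceptual and makes transparent why the result should hold, at the cost of having to justify the analytic/algebraic dictionary carefully. Both approaches share the same mild imprecision in the ramified case, where vanishing at each $\mathfrak p\mid p$ must be promoted to divisibility by $p$ in $\overline{\mathbb{Z}}$; this is routine but worth stating.
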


\begin{proof}
From equation 2 of \cite{KZ97} we have that $A=E_{p-1}$  can be expressed uniquely as
\[A(\tau) = \Delta(\tau)^nQ(\tau)^{\delta}R(\tau)^{\epsilon}\tilde{f}(j(\tau)),\]
where $p-1 = 12n + 4\delta + 6\epsilon$ and $\tilde{f}$ is a polynomial. The supersingular polynomial is defined as
\[ss_{p}(j) := \prod_{\substack{E/\overline{\mathbb{F}}_p \\ E \, \text{supersingular}}}{(j - j(E))}.\]
By Theorem 1 of \cite{KZ97}, it satisfies the equivalence
\[ss_{p}(j(\tau)) \equiv \pm j(\tau)^{\delta} (j(\tau)-1728)^{\epsilon}\tilde{f}(j(\tau)) \pmod p.\]
From Theorem 7.25 of \cite{O03} we know that $j(\tau)$ is a root of $ss_{p}(j)$  whenever $(\frac{-d}{p})=0,-1$, where $-d$ is the discriminant of $\mathbb{Q}(\tau)/\mathbb{Q}$. This implies $j(\tau)$ is a root of $\tilde{f}$, so long as $j(\tau) \neq 0, 1728$; that is, $\tau \neq i$ and $\tau \neq \rho = e^{2 \pi i/3}$. Since $R(i) = Q(\rho) = 0$, we have $A(\tau) \equiv 0 \pmod p$.
\end{proof}

Now, using this lemma and the properties of $v$, we can prove our main results.

\subsection{Proofs of main results}
\begin{proof}[Proof of Theorem \ref{weak}]
Fix $p,n$ and $m$ satisfying the hypotheses. Lemma \ref{weakprop} implies $D^{mp^{2}}f\in(A^p,p)^{m+1}$ within the ring of quasimodular forms, so we can write $D^nf$ as a sum of the form
\[D^{n}f = \sum_{0 \leq i \leq m}{p^{m-i}A^{ip}H_{i}},\]
 for some quasimodular forms $H_{i}$. Since $A$ is fixed under the map $\phi: \mathbb{Z}_{(p)}[P, Q, R] \to \mathbb{Z}_{(p)}[P^*, Q, R]$ that sends $P$ to $P^*$, Lemma \ref{relate} implies $\partial^nf=\sum{p^{m-i}A^{ip}\phi(H_{i})}$. In particular, we have that
\[t_f(\tau;n) = \frac{\partial^nf(\tau)}{\Omega_{\tau}^{2n + k}} = \sum_{0 \leq i \leq m}{p^{m-i}\frac{A(\tau)^{ip}\phi(H_{i})(\tau)}{\Omega_{\tau}^{2n + k}}} = \sum_{0 \leq i \leq m}{p^{m-i} \frac{A(\tau)^{ip}}{\Omega_{\tau}^{ip(p-1)}} \frac{\phi(H_{i})(\tau)}{\Omega_{\tau}^{2n + k - ip(p-1)}}}. \]
By Lemma \ref{ZK}, we have $\frac{A(\tau)^{ip}}{\Omega^{ip(p-1)}}$ is an algebraic integer multiple of $p^{ip}$, and by \eqref{deft} we have $\frac{\phi(H_{i})(\tau)}{\Omega_{\tau}^{2n + k - ip(p-1)}}$ is an algebraic integer, so $t_f(\tau; n) \equiv 0 \pmod {p^m}$.
\end{proof}

Theorem \ref{sharp} follows from a comparable argument, relying on Lemma \ref{sharpprop} rather than Lemma \ref{weakprop}.
\begin{proof}[Proof of Theorem \ref{sharp}]
Fix $p,n$ and $m$ satisfying the hypotheses. By Lemma \ref{sharpprop}, we can write 
\[D^{n}f = \sum_{0 \leq i \leq m}{p^{m-i}(A^p)^{i}H_{i}},\]
 for some quasimodular forms $H_{i}$. Since $A$ is fixed under the map sending $P \mapsto P^*$, Lemma \ref{relate} implies $\partial^nf$ can be written as $\sum{p^{m-i}(A^p)^{i}\phi(H_{i})}$, where $\phi: \mathbb{Z}_{(p)}[P, Q, R] \to \mathbb{Z}_{(p)}[P^*, Q, R]$ is the map that sends $P \mapsto P^*$. In particular, we have that
\[t_f(\tau;n) = \frac{\partial^nf(\tau)}{\Omega_{\tau}^{2n + k}} = \sum_{0 \leq i \leq m}{p^{m-i}\frac{A(\tau)^{ip}\phi(H_{i})(\tau)}{\Omega_{\tau}^{2n + k}}} = \sum_{0 \leq i \leq m}{p^{m-i} \frac{A(\tau)^{ip}}{\Omega_{\tau}^{ip(p-1)}} \frac{\phi(H_{i})(\tau)}{\Omega_{\tau}^{2n + k - ip(p-1)}}}. \]
By Lemma \ref{ZK}, we have $\frac{A(\tau)^{ip}}{\Omega^{ip(p-1)}}$ is an algebraic integer multiple of $p^{ip}$, and by \eqref{deft} we have $\frac{\phi(H_{i})(\tau)}{\Omega_{\tau}^{2n + k - ip(p-1)}}$ is an algebraic integer, so $t_f(\tau; n) \equiv 0 \pmod {p^m}$.
\end{proof}

\section{Acknowledgments:}
The authors would like to thank Professor Ken Ono for suggesting the topic and for advice and guidance throughout the process, and an anonymous referee for useful comments on a draft of this paper. We also would like to thank Jesse Silliman, Isabel Vogt, the other participants at the 2013 Emory REU, and Alexander Smith for useful conversations. Both authors are also grateful to NSF for its support.


\begin{thebibliography}{0}

\bibitem{C75}
H.~Cohen.
\newblock Sums involving the values at negative integers of {$L$}-functions of
  quadratic characters.
\newblock {\em Math. Ann.}, 217(3):271--285, 1975.

\bibitem{DG08}
B.~Datskovsky and P.~Guerzhoy.
\newblock {$p$}-adic interpolation of {T}aylor coefficients of modular forms.
\newblock {\em Math. Ann.}, 340(2):465--476, 2008.

\bibitem{KZ97}
M.~Kaneko and D.~Zagier.
\newblock Supersingular $j$-invariants, hypergeometric series, and {A}tkin's
  orthogonal polynomials.
\newblock {\em Proceedings of the Conference on Computational Aspects of Number
  Theory}, pages 97--126, 1997.

\bibitem{L76}
S.~Lang.
\newblock {\em Introduction to Modular Forms}.
\newblock Graduate Texts in Mathematics. Springer-Verlang, 1976.

\bibitem{O03}
K.~Ono.
\newblock {\em The web of modularity: Arithmetic of the Coefficients of Modular
  Forms and $q$-series}.
\newblock American Mathematical Society, 2003.

\bibitem{S72}
J.-P. Serre.
\newblock Formes modulaires et fonctions z{\^e}ta $p$-adiques.
\newblock {\em Proc. Internat. Summer School, Univ. Antwerp}, pages 191--268,
  1972.

\bibitem{SD73}
H.~P.~F. Swinnerton-Dyer.
\newblock On $l$-adic representations and congruences for coefficients of
  modular forms.
\newblock In W.~Kuijk and J.-P. Serre, editors, {\em Modular Functions of One
  Variable III}. Springer Berlin Heidelberg, 1973.

\bibitem{W95}
A.~Wiles.
\newblock Modular elliptic curves and {F}ermat's {L}ast {T}heorem.
\newblock {\em Annals of Mathematics}, 141(3):443--551, 1995.

\bibitem{Z}
D.~Zagier.
\newblock Elliptic modular forms and their applications.
\newblock In {\em The 1-2-3 of modular forms}, Universitext, pages 1--103.
  Springer, 2008.

\end{thebibliography}
\end{document}